\def\EE{\mathbb{E}}
\def\NN{\mathbb{N}}
\def\PP{\mathbb{P}}
\def\RR{\mathbb{R}}
\def\SS{\mathbb{S}}
\def\XX{\mathbb{X}}
\def\d{\delta}
\def\k{\kappa}
\def\cH{\mathcal{H}}
\def\cI{\mathcal{I}}
\def\cV{\mathcal{V}}
\def\sX{\mathscr{X}}
\def\dint{\textup{d}}
\def\I{{\mathds{1}}}
\begin{document}

\title*{Poisson point process convergence\\ and extreme values in stochastic geometry}
\titlerunning{Poisson point process convergence} 
\author{Matthias Schulte and Christoph Th\"ale}
\authorrunning{Matthias Schulte and Christoph Th\"ale}
\institute{Matthias Schulte \at Karlsruhe Institute of Technology, Institute of Stochastics, D-76128 Karlsruhe, Germany.\\ \email{matthias.schulte@kit.edu}
\and Christoph Th\"ale \at Ruhr University Bochum, Faculty of Mathematics, D-44780 Bochum, Germany.\\ \email{christoph.thaele@rub.de}}
%
%
\maketitle

\abstract{Let $\eta_t$ be a Poisson point process with intensity measure $t\mu$, $t>0$, over a Borel space $\XX$, where $\mu$ is a fixed measure. Another point process $\xi_t$ on the real line is constructed by applying a symmetric function $f$ to every $k$-tuple of distinct points of $\eta_t$. It is shown that $\xi_t$ behaves after appropriate rescaling like a Poisson point process, as $t\to\infty$, under suitable conditions on $\eta_t$ and $f$. This also implies Weibull limit theorems for related extreme values. The result is then applied to investigate problems arising in stochastic geometry, including small cells in Voronoi tessellations, random simplices generated by non-stationary hyperplane processes, triangular counts with angular constraints and non-intersecting $k$-flats. Similar results are derived if the underlying Poisson point process is replaced by a binomial point process.}

\section{Introduction}

This chapter deals with the application of the Malliavin-Chen-Stein method for Poisson approximation to problems arising in stochastic geometry. More precisely, we will develop a general framework which yields Poisson point process convergence and Weibull limit theorems for the order-statistic of a class of functionals driven by an underlying Poisson or binomial point process on an abstract state space.

To motivate our general theory, let us describe a particular situation to which our results can be applied (see Remark \ref{rem:VoronoiGraph} and also Example 4 in \cite{STScalingLimits} for more details). Let $K$ be a convex body in $\RR^d$, $d\geq 2$, (that is a compact convex set with interior points) whose volume is denoted by $\ell_d(K)$. For $t>0$ let $\eta_t$ be the restriction to $K$ of a translation-invariant Poisson point process in $\RR^d$ with intensity $t$ and let $(\theta_t)_{t>0}$ be a sequence of real numbers satisfying $t^{2/d}\theta_t\to\infty$, as $t\to\infty$. Taking $\eta_t$ as vertex set of a random graph, we connect two different points of $\eta_t$ by an edge if and only if their Euclidean distance does not exceed $\theta_t$. The so-constructed random geometric graph, or Gilbert graph, is among the most prominent random graph models (see \cite{RSTGilbert} for some recent developments and \cite{PenroseBook} for an exhaustive reference). We now consider the order statistic $\xi_t=\{M_t^{(m)}:m\in\NN\}$ defined by the edge-lengths of the random geometric graph, that is, $M_t^{(1)}$ is the length of the shortest edge, $M_t^{(2)}$ is the length of the second-shortest edge etc. Now, our general theory implies that the re-scaled point process $t^{2/d}\xi_t$ converges towards a Poisson point process on $\RR_+$ with intensity measure given by $B\mapsto \beta d\int_B u^{d-1}\,\dint u$ for Borel sets $B\subset\RR_+$, where $\beta=\kappa_d\ell_d(K)/2$ and $\kappa_d$ stands for the volume of the $d$-dimensional unit ball. Moreover, it implies that there is a constant $C>0$ only depending on $K$ such that
$$
\left|\PP(t^{2/d}M_t^{(m)}>y)-e^{-\beta y^d}\sum_{i=0}^{m-1}\frac{(\beta y^d)^i}{i!}\right|\leq C\,\max\{y^{d+1},y^{2d}\} \, t^{-2/d}
$$
for any $m\in\NN$, $y\in(0,t^{2/d}\theta_t)$ and $t\geq 1$. In particular, the distribution of the re-scaled length $t^{2/d}M_t^{(1)}$ of the shortest edge of the random graph converges, as $t\to\infty$, to a Weibull distribution with survival function $y\mapsto e^{-\beta y^d}$, $y\geq 0$, at rate $t^{-2/d}$.

Our purpose here is to establish a general framework that can be applied to a broad class of examples. We also allow the underlying point process to be a Poisson \textit{or} a binomial point process. Our main result for the Poisson case refines those in \cite{STScalingLimits} or \cite{STFlats} and improves the rate of convergence. Its proof follows the ideas of \cite{Peccati2011} and \cite{STScalingLimits}, but uses the special structure of the functional under consideration as well as recent techniques from \cite{LPS} around Mehler's formula on the Poisson space. This saves some technical computations related to the product formula for multiple stochastic integrals (cf.\ \cite{LastChapter} or \cite{LPST} and \cite{Surgailis}). In case of an underlying binomial point process we use a bound for the Poisson approximation of (classical) U-statistics from \cite{BarbourEagleson}. As application of our main results, we present a couple of examples, which continue and complement those studied in \cite{STScalingLimits} and \cite{STFlats}. These are
\begin{description}

\item[1.] cells with small (nucleus-centred) inradius in a Voronoi tessellation,
\item[2.] simplices generated by a class of rotation-invariant hyperplane processes,
\item[3.] almost collinearities and flat triangles in a planar Poisson or binomial process,
\item[4.] arbitrary length-power-proximity functionals of non-intersecting $k$-flats.
\end{description}

The rest of this chapter is organized as follows. Our main results and their framework are presented in Section \ref{sec:Results}. The application to problems arising in stochastic geometry is the content of Section \ref{sec:Examples}. The proofs of the main results are postponed to the final Section \ref{sec:Proofs}.

\section{Results}\label{sec:Results}

Let $\eta_t$ ($t>0$) be a Poisson point process on a measurable space $(\XX,{\mathscr{X}})$ with intensity measure $\mu_t:=t\mu$, where $\mu$ is a fixed $\sigma$-finite measure on $\XX$. To avoid technical complications, we shall assume in this chapter that $(\XX,\sX)$ is a standard Borel space. This ensures, for example, that any point process on $\XX$ can almost surely be represented as a sum of Dirac measures. Let further $k\in\NN$ and $f: \XX^k\to\RR$ be a measurable symmetric function. Our aim here is to investigate the point process $\xi_t$ on $\RR$ which is induced by $\eta_t$ and $f$ as follows:
\begin{equation}\label{eq:defXit}
\xi_t:=\frac{1}{k!} \sum_{(x_1,\hdots,x_k)\in\eta_{t,\neq}^k} \delta_{f(x_1,\hdots,x_k)}\,.
\end{equation}
Here $\eta_{t,\neq}^k$ stands for the set of all $k$-tuples of distinct points of $\eta_t$ and $\delta_x$ is the unit Dirac measure concentrated at the point $x\in\RR$. We shall assume that
$$
\mu_t^{k}(f^{-1}([-s,s]))<\infty\qquad{\rm for\ all}\qquad s>0\,,
$$
to ensure that $\xi_t$ is a locally finite counting measure on $\RR$.

For $m\in\NN$ we denote by $M_t^{(m)}$ the distance from the origin to the $m$-th point of $\xi_t$ on the positive half-line $\RR_+:=(0,\infty)$, and by $M_t^{(-m)}$ the distance from the origin to the $m$-th point on the negative half-line $\RR_-:=(-\infty,0]$. If $\xi_t$ has less than $m$ points on the positive or negative half-line, we put $M^{(m)}_t=\infty$ or $M_t^{(-m)}=\infty$, respectively.

Fix $\gamma\in\RR$ and for $y_1,y_2\in\RR$ define
\begin{align*}
\alpha_t(y_1,y_2) := 
& \frac{1}{k!} \int_{\XX^k}\I\{t^{-\gamma} y_1 < f(x_1,\hdots,x_k)\leq t^{-\gamma} y_2\} \, \mu_t^{k}(\dint(x_1,\hdots,x_k))\,.
\end{align*}
We remark that, as a consequence of the multivariate Mecke formula for Poisson point processes (see \cite{LastChapter}), $\alpha_t(y_1,y_2)$ can be interpreted as
$$
\alpha_t(y_1,y_2)=\frac{1}{k!} \EE \sum_{(x_1,\hdots,x_k)\in\eta_{t,\neq}^k} \I\{t^{-\gamma} y_1 < f(x_1,\hdots,x_k)\leq t^{-\gamma} y_2\}\,,
$$
which is the expected number of points of $\xi_t$ in $(t^{-\gamma}y_1,t^{-\gamma}y_2]$ if $y_1<y_2$ and zero if $y_1\geq y_2$. Moreover, let, for $k\geq 2$,
\begin{equation*}
\begin{split}
r_t(y):= \max_{1\leq \ell \leq k-1} \int_{\XX^\ell} & \bigg(\int_{\XX^{k-\ell}} \I\{|f(x_1,\hdots ,x_k)|\leq t^{-\gamma} y\}\, \mu_t^{k-\ell}(\dint(x_{\ell+1},\hdots,x_k)) \bigg)^2 \\ & \mu_t^\ell(\dint(x_1,\hdots,x_\ell))
\end{split}
\end{equation*}
for $y\geq 0$ and put $r_t\equiv 0$ if $k=1$.

\begin{theorem}\label{thm:Main}
Let $\nu$ be a $\sigma$-finite non-atomic Borel measure on $\RR$. Then, there is a constant $C\geq 1$ only depending on $k$ such that
\begin{equation*}
\begin{split}
\bigg| \PP(t^\gamma M_t^{(m)}>y)-e^{-\nu((0,y])} \sum_{i=0}^{m-1} \frac{\nu((0,y])^i}{i!}\bigg| \leq |\nu((0,y])-\alpha_t(0,y)|+C\,r_t(y)
\end{split}
\end{equation*}
and
\begin{equation*}
\begin{split}
 \bigg| \PP(t^\gamma M_t^{(-m)}\geq y)-e^{-\nu((-y,0])} \sum_{i=0}^{m-1} \frac{\nu((-y,0])^i}{i!}\bigg| \leq |\nu((-y,0])-\alpha_t(-y,0)|+C\, r_t(y)
\end{split}
\end{equation*}
for all $m\in\NN$ and $y\geq 0$.
Moreover, if
\begin{equation}\label{eqn:MainAssumptionI}
\lim\limits_{t\to\infty}\alpha_t(y_1,y_2)=\nu((y_1,y_2])\qquad\text{for\ all}\qquad y_1,y_2\in\RR \ \text{ with } \ y_1<y_2
\end{equation}
and
\begin{equation}\label{eqn:MainAssumptionII}
\lim\limits_{t\to\infty} r_t(y)=0\qquad\text{for\ all}\qquad y>0\,,
\end{equation}
the rescaled point processes $(t^\gamma \xi_t)_{t>0}$ converge in distribution to a Poisson point process on $\RR$ with intensity measure $\nu$.
\end{theorem}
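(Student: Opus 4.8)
The plan is to deduce the point-process convergence from a criterion of Kallenberg. Since $\nu$ is non-atomic, the target Poisson point process $\zeta$ on $\RR$ with intensity $\nu$ is simple, and its law is determined by its void probabilities over the dissecting semiring $\mathcal{I}$ of bounded half-open intervals $(a,b]\subset\RR$. Hence it suffices to verify, for every such interval $I=(a,b]$ (each of which is automatically a $\nu$-continuity set), the two conditions
\[
\EE[t^\gamma\xi_t(I)]\to \nu(I)
\quad\text{and}\quad
\PP(t^\gamma\xi_t(I)=0)\to e^{-\nu(I)}
\quad (t\to\infty),
\]
after which convergence of the whole family $(t^\gamma\xi_t)_{t>0}$ to $\zeta$ follows from Kallenberg's theorem. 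Note that the displayed $M_t^{(m)}$-bounds of Theorem \ref{thm:Main} by themselves only control the marginal laws of the counts $t^\gamma\xi_t((0,y])$ and $t^\gamma\xi_t((-y,0])$ on the two half-lines, so passing to the full point process genuinely requires this reduction to a generating semiring.

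First I would settle the intensity condition. By the Mecke-formula interpretation recorded above, $\EE[t^\gamma\xi_t((a,b])]=\alpha_t(a,b)$, which tends to $\nu((a,b])$ by assumption (\ref{eqn:MainAssumptionI}). For the void probabilities I would run the same Malliavin-Chen-Stein estimate that underlies the displayed half-line bounds, but applied to $(a,b]$ rather than to $(0,y]$: the count $t^\gamma\xi_t((a,b])=\frac{1}{k!}\sum_{(x_1,\dots,x_k)\in\eta_{t,\neq}^k}\I\{a<t^\gamma f(x_1,\dots,x_k)\leq b\}$ is a Poisson U-statistic, and the Chen-Stein bound yields $\big|\PP(t^\gamma\xi_t((a,b])=0)-e^{-\alpha_t(a,b)}\big|\leq C\,\tilde r_t(a,b)$, where $\tilde r_t(a,b)$ is the overlapping-tuple integral associated with the indicator $\I\{a<t^\gamma f\leq b\}$. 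Since $\I\{a<t^\gamma f\leq b\}\leq \I\{|f|\leq t^{-\gamma}\max(|a|,|b|)\}$ in every sign configuration of $a$ and $b$, this error is dominated by $r_t(\max(|a|,|b|))$, which vanishes by assumption (\ref{eqn:MainAssumptionII}). Combining this with $\alpha_t(a,b)\to\nu((a,b])$ gives $\PP(t^\gamma\xi_t((a,b])=0)\to e^{-\nu((a,b])}$, as required.

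The main obstacle is the single-interval Poisson approximation in the second step, i.e.\ bounding the total-variation distance between the law of the U-statistic-type count $t^\gamma\xi_t((a,b])$ and a Poisson distribution with the matching mean $\alpha_t(a,b)$. Here the dependence among $k$-tuples of $\eta_t$ that share $\ell\geq 1$ points is exactly what obstructs Poisson behaviour, and controlling it is precisely the role of the quantity $r_t$; making this rigorous is where the special product structure of $f$ and the Mehler-formula techniques enter. The remaining steps---verifying the hypotheses of Kallenberg's theorem on the semiring $\mathcal{I}$ and checking the elementary domination of $\tilde r_t(a,b)$ by $r_t(\max(|a|,|b|))$---are routine once the single-interval bound is in place.
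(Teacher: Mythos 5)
Your overall architecture (a Kallenberg-type convergence criterion combined with a Chen--Stein/Mehler total-variation bound for the U-statistic counts) is the same as the paper's, but your reduction step contains a genuine error. You claim that, because the limiting Poisson process is simple, its law --- and hence convergence towards it --- is governed by void probabilities on the \emph{semiring} of single intervals $(a,b]$, supplemented by convergence of the one-dimensional intensities $\EE[t^\gamma\xi_t((a,b])]$. No such criterion holds. Void probabilities on single intervals do not determine the law of a simple point process, and adding single-interval intensities does not repair this: the classical ``quasi-Poisson'' counterexamples (Moran, Lee, Sz\'asz) provide simple, non-Poisson point processes whose number of points in \emph{every} interval is exactly Poisson distributed with the correct mean, so both of your conditions hold for the constant sequence $\xi_n$ equal to such a process, while $\xi_n$ certainly does not converge to the Poisson process. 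This is precisely why the theorem the paper invokes, \cite[Theorem 16.29]{Kallenberg}, requires \emph{two} conditions: (i) convergence of void probabilities on the ring ${\bf V}$ of \emph{finite unions} of intervals, and (ii) the multiple-point condition $\PP(\xi_t(t^{-\gamma}I)>1)\to\PP(\xi(I)>1)$ for single intervals $I$. Your proposal verifies neither (i) beyond single intervals nor (ii) at all.

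The gap is fixable with exactly the tools you already have, and the fix is the paper's proof: for a finite union $V=\bigcup_{i=1}^n(a_i,b_i]$ with $a_1<b_1<\hdots<a_n<b_n$, the count $S_{V,t}$ is still a Poisson U-statistic, its mean is $\sum_{i=1}^n\alpha_t(a_i,b_i)$, the limit variable $\xi(V)$ is still Poisson distributed with mean $\sum_{i=1}^n\nu((a_i,b_i])$, and your domination argument applies verbatim with $y_{\max}=\max\{|a_1|,|b_n|\}$, giving $d_{TV}(S_{V,t},\xi(V))\leq|\sum_{i=1}^n\alpha_t(a_i,b_i)-\sum_{i=1}^n\nu((a_i,b_i])|+C\,r_t(y_{\max})\to 0$. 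Crucially, this \emph{total-variation} convergence delivers both Kallenberg conditions simultaneously, since it controls $\PP(\,\cdot\,=0)$ and $\PP(\,\cdot\,>1)$ alike; restricting attention to void probabilities throws away exactly the information needed for condition (ii). Note also that the theorem's quantitative half-line bounds are part of the statement, not a given: they follow from the same TV bound applied to $V=(0,y]$ and $V=(-y,0]$ together with the identities $\{t^\gamma M_t^{(m)}>y\}=\{S_{(0,y],t}<m\}$ and $\{t^\gamma M_t^{(-m)}\geq y\}=\{S_{(-y,0],t}<m\}$, a step your proposal omits entirely.
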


\begin{remark}
Let us comment on the particular case $k=1$. Here, the point process $\xi_t$ is itself a Poisson point process on $\RR$ with intensity measure derived from $\alpha_t$ as a consequence of the famous mapping theorem, for which we refer to Section 2.3 in \cite{Kingman}. This is confirmed by our Theorem \ref{thm:Main}.
\end{remark}

\begin{remark}
Theorem \ref{thm:Main} generalizes earlier versions in \cite{STScalingLimits,STFlats}, which have a similar structure, but where the quantity
\begin{equation*}
\begin{split}
\hat{r}_t(y):= \sup_{\substack{(\hat{x}_1,\hdots,\hat{x}_\ell)\in \XX^\ell\\ 1\leq \ell \leq k-1}} \mu_t^{k-\ell} ( &\{(x_1,\hdots,x_{k-\ell})\in \XX^{k-\ell}:\\
&|f(\hat{x}_1,\hdots,\hat{x}_\ell,x_1,\hdots ,x_{k-\ell})|\leq t^{-\gamma} y\})
\end{split}
\end{equation*}
for $y\geq 0$ is considered instead of $r_t(y)$. It is easy to see that $r_t(y)$ and $\hat r_t(y)$ are related by
$$
r_t(y)\leq \inf_{\varepsilon>0}\alpha_t(-y-\varepsilon,y) \, \hat{r}_t(y) \quad \text{for all} \quad y\geq 0\,.
$$
In particular, this means that the rate of convergence of the order statistics in Theorem \ref{thm:Main} improves that in \cite{STScalingLimits,STFlats} by removing a superfluous square root from $\hat{r}_t(y)$. Moreover and in contrast to \cite{STScalingLimits,STFlats}, the constant $C$ only depends on the parameter $k$.
\end{remark}

In our applications presented in Section \ref{sec:Examples}, the function $f$ is always strictly positive so that $\xi_t$ is concentrated on $\RR_+$. Moreover, the measure $\nu$ will be of a special form. The following corollary deals with this situation. To state it, we use the convention that $\alpha_t(y):=\alpha_t(0,y)$ for $y\geq 0$.

\begin{corollary}\label{corol:Main}
Let $\beta,\tau>0$. Then there is a constant $C>0$ only depending on $k$ such that
\begin{equation*}
\begin{split}
\bigg| \PP(t^\gamma M_t^{(m)}>y)-e^{-\beta y^\tau} \sum_{i=0}^{m-1} \frac{(\beta y^\tau)^i}{i!}\bigg|\leq |\beta y^\tau-\alpha_t(y)|+C\,r_t(y)
\end{split}
\end{equation*}
for all $m\in\NN$ and $y\geq 0$. If, additionally,
\begin{equation}\label{eqn:MainAssumptionIII}
\lim_{t\to\infty} \alpha_t(y)=\beta y^\tau \quad \text{ and } \quad  \lim_{t\to\infty} r_t(y)=0\qquad\text{for\ all}\qquad y>0\,,
\end{equation}
the rescaled point processes $(t^\gamma \xi_t)_{t>0}$ converge in distribution to a Poisson point process on $\RR_+$ with the intensity measure
\begin{equation}\label{eqn:nu}
\nu(B)= \beta \tau \int_B u^{\tau-1} \, \dint u, \quad B\subset\RR_+\text{ Borel}\,.
\end{equation}
\end{corollary}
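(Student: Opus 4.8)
The plan is to read the corollary off Theorem~\ref{thm:Main} by specializing the abstract limit measure to the concrete $\nu$ in \eqref{eqn:nu}. First I would record that $\nu$, extended to all of $\RR$ via $\nu(B)=\beta\tau\int_{B\cap\RR_+}u^{\tau-1}\,\dint u$, is an admissible input for Theorem~\ref{thm:Main}: it is absolutely continuous with respect to Lebesgue measure with density $u\mapsto\beta\tau u^{\tau-1}\I\{u>0\}$, hence non-atomic, and it is finite on bounded intervals, hence $\sigma$-finite and Borel. A one-line computation then gives, using $\tau>0$,
\[
\nu((0,y])=\beta\tau\int_0^y u^{\tau-1}\,\dint u=\beta y^\tau \qquad (y\ge 0).
\]

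With this identity the quantitative estimate is immediate. Substituting $\nu((0,y])=\beta y^\tau$ into the first inequality of Theorem~\ref{thm:Main} and invoking the convention $\alpha_t(y):=\alpha_t(0,y)$ reproduces the asserted bound verbatim, with the constant $C$ inheriting its dependence on $k$ alone. The negative half-line plays no role here, since $M_t^{(m)}$ refers to the positive points of $\xi_t$.

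For the convergence statement I would verify the two hypotheses \eqref{eqn:MainAssumptionI} and \eqref{eqn:MainAssumptionII} of Theorem~\ref{thm:Main} under \eqref{eqn:MainAssumptionIII}. Condition \eqref{eqn:MainAssumptionII} is nothing but the second half of \eqref{eqn:MainAssumptionIII}. To obtain \eqref{eqn:MainAssumptionI} for arbitrary $y_1<y_2$ from the one-sided hypothesis $\alpha_t(y)\to\beta y^\tau$ ($y>0$), I would exploit the additive structure of $\alpha_t$ in its arguments. For $0\le y_1<y_2$ one has $\alpha_t(y_1,y_2)=\alpha_t(0,y_2)-\alpha_t(0,y_1)=\alpha_t(y_2)-\alpha_t(y_1)$, which tends to $\beta y_2^\tau-\beta y_1^\tau=\nu((y_1,y_2])$. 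For ranges meeting $\RR_-$ I would use the standing assumption (recorded just before the corollary) that $f$ is strictly positive, so that $\xi_t$ is concentrated on $\RR_+$: when $y_2\le 0$ the indicator defining $\alpha_t(y_1,y_2)$ vanishes, giving $\alpha_t(y_1,y_2)\to 0=\nu((y_1,y_2])$, while for $y_1<0<y_2$ the lower constraint $t^{-\gamma}y_1<f$ is automatic, so $\alpha_t(y_1,y_2)=\alpha_t(y_2)\to\beta y_2^\tau=\nu((y_1,y_2])$. Theorem~\ref{thm:Main} then yields convergence of $(t^\gamma\xi_t)_{t>0}$ to the Poisson point process with intensity $\nu$; as $\nu$ is concentrated on $\RR_+$, this limit is a Poisson point process on $\RR_+$, as claimed.

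I do not anticipate any genuine difficulty, as the whole argument is a specialization together with the elementary integral above. The only point requiring a moment's care is the passage from the one-sided limit in \eqref{eqn:MainAssumptionIII} to the two-sided condition \eqref{eqn:MainAssumptionI}; it is precisely the positivity of $f$ (equivalently, that $\xi_t$ lives on $\RR_+$) together with the additivity of $\alpha_t$ that make the two conditions match.
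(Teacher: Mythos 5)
Your proposal is correct and takes essentially the same route as the paper, whose entire proof is the one-line remark that Theorem~\ref{thm:Main} applied with $\nu$ as in \eqref{eqn:nu} yields the corollary. Your write-up merely makes explicit the routine details the paper suppresses: the computation $\nu((0,y])=\beta y^\tau$, and the passage from the one-sided limits in \eqref{eqn:MainAssumptionIII} to condition \eqref{eqn:MainAssumptionI} via the additivity of $\alpha_t$ and the strict positivity of $f$ assumed in the paragraph preceding the corollary.
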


\begin{remark}
The limiting Poisson point process appearing in the context of Corollary \ref{corol:Main} is usually called a Weibull process on $\RR_+$, the reason for this being that the distance from the origin to the next point follows a Weibull distribution.
\end{remark}

If $\mu$ is a finite measure, i.e., if $\mu(\XX)<\infty$, one can replace the underlying Poisson point process $\eta_t$ by a binomial point process $\zeta_n$ having a fixed number of $n$ points which are independent and identically distributed according to the probability measure $\mu(\,\cdot\,)/\mu(\XX)$. Without loss of generality we assume that $\mu(\XX)=1$ in what follows. In this situation, we consider instead of $\xi_t$ defined at \eqref{eq:defXit} the derived point process $\widehat{\xi}_n$ on $\RR$ given by
$$
\widehat{\xi}_n:=\frac{1}{k!}\sum_{(x_1,\hdots,x_k)\in\zeta^k_{n,\neq}} \delta_{f(x_1,\hdots,x_k)}\,,
$$
where $\zeta^k_{n,\neq}$ stands for the collection of all $k$-tuples of distinct points of $\zeta_n$. For $m\in\NN$ let $\widehat{M}_n^{(m)}$ and $\widehat{M}_n^{(-m)}$ be defined similarly as $M_n^{(m)}$ and $M_n^{(-m)}$ above with $\xi_t$ replaced by $\widehat{\xi}_n$. For $n,k\in\NN$ we denote by $(n)_k$ the descending factorial $n\cdot(n-1)\cdot\hdots\cdot(n-k+1)$. Using the notation
\begin{eqnarray*}
\alpha_n(y_1,y_2) &:=& {(n)_k\over k!}\int_{\XX^k}\I\{n^{-\gamma}y_1<f(x_1,\ldots,x_k)\leq n^{-\gamma}y_2\}\,\mu^k(\dint(x_1,\ldots,x_k))\,,\\
r_n(y) &:=& \max_{1\leq\ell\leq k-1}(n)_{2k-\ell}\int_{\XX^\ell}\bigg(\int_{\XX^{k-\ell}}\I\{|f(x_1,\ldots,x_k)|\leq n^{-\gamma}y\}\\
&&\qquad\qquad\qquad\qquad\qquad \mu^{k-\ell}(\dint(x_{\ell+1},\ldots,x_k))\bigg)^2\,\mu^{\ell}(\dint(x_1,\ldots,x_\ell))
\end{eqnarray*}
for $y_1,y_2,y\in\RR$, we can now present the binomial counterpart of Theorem \ref{thm:Main}.

\begin{theorem}\label{thm:MainBinomial}
Let $\mu$ be a probability measure on $\XX$ and $\nu$ be a $\sigma$-finite non-atomic Borel measure on $\RR$. Then, there is a constant $C\geq 1$ only depending on $k$ such that
\begin{equation*}
\begin{split}
\bigg| \PP(n^\gamma \widehat{M}_n^{(m)}>y)-e^{-\nu((0,y])} \sum_{i=0}^{m-1} \frac{\nu((0,y])^i}{i!}\bigg| &\leq |\nu((0,y])-\alpha_n(0,y)|\\
&\qquad+C\Big(r_n(y)+{\alpha_n(0,y)\over n}\Big)
\end{split}
\end{equation*}
and
\begin{equation*}
\begin{split}
 \bigg| \PP(n^\gamma \widehat{M}_n^{(-m)}\geq y)-e^{-\nu((-y,0])} \sum_{i=0}^{m-1} \frac{\nu((-y,0])^i}{i!}\bigg| &\leq |\nu((-y,0])-\alpha_n(-y,0)|\\
 &\qquad+C\Big(r_n(y)+{\alpha_n(-y,0)\over n}\Big)
\end{split}
\end{equation*}
for all $m\in\NN$ and $y\geq 0$.
Moreover, if
\begin{equation*}\label{eqn:MainAssumptionI-Binom}
\lim\limits_{n\to\infty}\alpha_n(y_1,y_2)=\nu((y_1,y_2])\qquad\text{for\ all}\qquad y_1,y_2\in\RR \ \text{ with } \ y_1<y_2
\end{equation*}
and
\begin{equation*}\label{eqn:MainAssumptionII-Binom}
\lim\limits_{n\to\infty} r_n(y)=0\qquad\text{for\ all}\qquad y>0\,,
\end{equation*}
the rescaled point processes $(n^\gamma \widehat{\xi}_n)_{n\geq 1}$ converge in distribution to a Poisson point process on $\RR$ with intensity measure $\nu$.
\end{theorem}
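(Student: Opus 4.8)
The plan is to reduce both order-statistics bounds to a single Poisson approximation for a classical U-statistic and then to deduce the point-process statement from a Kallenberg-type criterion. Fix $y\geq 0$ and write $X_1,\dots,X_n$ for the i.i.d.\ points of $\zeta_n$ (each with law $\mu$). Because $f$ is symmetric, the number of points of the rescaled process $n^\gamma\widehat{\xi}_n$ in $(0,y]$ equals the U-statistic
\[
W_n:=\sum_{S\in\binom{[n]}{k}} I_S,\qquad I_S:=\I\{0<f(X_i:i\in S)\leq n^{-\gamma}y\},
\]
the sum running over all $k$-element subsets $S$ of $[n]:=\{1,\dots,n\}$. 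By the i.i.d.\ structure, $\EE W_n=\binom{n}{k}\int_{\XX^k}\I\{0<f\leq n^{-\gamma}y\}\,\dint\mu^k=\alpha_n(0,y)=:\lambda$, and the key observation is that $\{n^\gamma\widehat{M}_n^{(m)}>y\}=\{W_n\leq m-1\}$, since at most $m-1$ points of $n^\gamma\widehat{\xi}_n$ lie in $(0,y]$ exactly when the $m$-th smallest positive point exceeds $y$.

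The first step is to control $d_{\mathrm{TV}}(W_n,\mathrm{Po}(\lambda))$ through the Barbour--Eagleson Poisson approximation for U-statistics \cite{BarbourEagleson}, whose natural dependency structure declares $I_S,I_T$ dependent precisely when $S\cap T\neq\emptyset$. Since the $X_i$ are independent, $I_S$ is independent of $\{I_T:T\cap S=\emptyset\}$, so only two contributions remain. The product term $b_1=\sum_S\sum_{T\cap S\neq\emptyset}\EE I_S\,\EE I_T=\lambda^2\bigl(1-\binom{n-k}{k}/\binom{n}{k}\bigr)$ is handled by the elementary estimate $1-\binom{n-k}{k}/\binom{n}{k}\leq C/n$, which after multiplication by the Stein factor $\min(1,\lambda^{-1})$ yields a bound of order $C\,\lambda/n=C\,\alpha_n(0,y)/n$. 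The joint term $b_2=\sum_S\sum_{\emptyset\neq T\cap S,\,T\neq S}\EE[I_S I_T]$ is organized by the overlap $\ell=|S\cap T|\in\{1,\dots,k-1\}$; there are exactly $(n)_{2k-\ell}/(\ell!\,((k-\ell)!)^2)$ pairs with a given $\ell$, and conditioning on the shared variables shows each contributes $\int_{\XX^\ell}\bigl(\int_{\XX^{k-\ell}}\I\{0<f\leq n^{-\gamma}y\}\,\dint\mu^{k-\ell}\bigr)^2\dint\mu^\ell$. Since $0<f\leq n^{-\gamma}y$ forces $|f|\leq n^{-\gamma}y$, each summand is dominated by the $\ell$-th term defining $r_n(y)$, whence $b_2\leq C\,r_n(y)$. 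Collecting these, $d_{\mathrm{TV}}(W_n,\mathrm{Po}(\lambda))\leq C\bigl(r_n(y)+\alpha_n(0,y)/n\bigr)$ with $C$ depending only on $k$.

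Next I combine this with a perturbation of the Poisson parameter. Writing $g(s):=\sum_{i=0}^{m-1}e^{-s}s^i/i!=\PP(\mathrm{Po}(s)\leq m-1)$, the total-variation bound applied to the set $\{0,\dots,m-1\}$ gives $|\PP(W_n\leq m-1)-g(\lambda)|\leq d_{\mathrm{TV}}(W_n,\mathrm{Po}(\lambda))$, while the telescoping identity $g'(s)=-e^{-s}s^{m-1}/(m-1)!$ shows $|g'|\leq 1$, so $|g(\lambda)-g(\nu((0,y]))|\leq|\alpha_n(0,y)-\nu((0,y])|$ by the mean value theorem. The triangle inequality now produces exactly the first displayed estimate; the negative half-line bound follows verbatim after replacing the indicator in $I_S$ by $\I\{-n^{-\gamma}y<f\leq 0\}$, so that the relevant U-statistic counts points of $n^\gamma\widehat{\xi}_n$ in $(-y,0]$ with mean $\alpha_n(-y,0)$.

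For the final convergence statement I pass from single intervals to a dissecting ring $\mathcal{U}$ of finite unions of bounded half-open intervals bounded away from the origin. For $U\in\mathcal{U}$ the count $n^\gamma\widehat{\xi}_n(U)$ is again a U-statistic of the above type, and the same two-term estimate makes it total-variation close to $\mathrm{Po}(\alpha_n^U)$ with error controlled by an $r_n$-type quantity (dominated by $r_n(y)$ for $y$ large enough to contain $U$) plus $\alpha_n^U/n$. Under the hypotheses $\alpha_n^U\to\nu(U)$ and $r_n\to 0$ these errors vanish, whence $\EE[n^\gamma\widehat{\xi}_n(U)]\to\nu(U)$ and $\PP(n^\gamma\widehat{\xi}_n(U)=0)\to e^{-\nu(U)}$; since $\nu$ is non-atomic the limiting process is simple, and Kallenberg's convergence criterion (convergence of void probabilities and intensities on a dissecting ring) yields $n^\gamma\widehat{\xi}_n\xrightarrow{d}\mathrm{Po}(\nu)$. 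The main obstacle throughout is the bookkeeping of the second step: correctly matching the two Barbour--Eagleson contributions to $\alpha_n(0,y)/n$ and $r_n(y)$, in particular verifying the combinatorial estimate for $b_1$ and the overlap-by-overlap identification of $b_2$ with the descending-factorial form of $r_n$, all while keeping the constant $C$ dependent only on $k$.
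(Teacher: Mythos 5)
Your argument is, in substance, the paper's own: the paper proves this theorem by combining Proposition \ref{prop:PoissApproxClassicalUStat} (whose proof via Theorem 2 of \cite{BarbourEagleson} is exactly your computation --- the same product term handled through $1-\binom{n-k}{k}/\binom{n}{k}\le C/n$, and the same overlap-by-overlap identification of the joint term with the descending-factorial form of $r_n$, using that $\{0<f\le n^{-\gamma}y\}\subset\{|f|\le n^{-\gamma}y\}$) with the Kallenberg-criterion scheme from the proof of Theorem \ref{thm:Main}, including the equivalence of $\{n^\gamma\widehat M_n^{(m)}>y\}$ (resp.\ $\{n^\gamma\widehat M_n^{(-m)}\ge y\}$) with the event that the count in $(0,n^{-\gamma}y]$ (resp.\ $(-n^{-\gamma}y,0]$) is at most $m-1$. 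Your quantitative steps are correct; the mean-value-theorem treatment of the parameter change (via $|g'|\le 1$) is a harmless substitute for the paper's bound $d_{TV}(Y,Y_A)\le|s-s_A|$, and invoking the ``void probabilities plus first moments'' form of Kallenberg's criterion instead of the paper's use of Theorem 16.29 in \cite{Kallenberg} (void probabilities on a ring plus $\limsup_n\PP(\xi_n(I)>1)\le\PP(\xi(I)>1)$ on the intervals) is legitimate, since the limiting Poisson process is simple because $\nu$ is non-atomic.

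There is, however, one flaw in your final step: you restrict the ring $\mathcal U$ to finite unions of intervals \emph{bounded away from the origin}. That class is not a dissecting ring for $\RR$ --- no member contains $0$ --- so the criterion you cite does not apply as stated; what it would yield is convergence of the restrictions to $\RR\setminus\{0\}$, which does not imply convergence on $\RR$ (mass can accumulate at the origin undetected, exactly as $\delta_{1/n}\to\delta_0$ on $\RR$ while the restrictions to $\RR\setminus\{0\}$ converge to the empty process). The restriction is also unnecessary: for any bounded interval $(a,b]$, including those containing $0$, the count $n^\gamma\widehat\xi_n((a,b])$ is a U-statistic of the same form with mean $\alpha_n(a,b)$, and your $b_1$/$b_2$ estimate applies verbatim, the joint term being dominated by $r_n(y)$ with $y=\max\{|a|,|b|\}$ because $\I\{f\in n^{-\gamma}(a,b]\}\le\I\{|f|\le n^{-\gamma}y\}$. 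Dropping the restriction (the paper works with all of ${\bf I}$ and ${\bf V}$, origin included) closes this gap and makes your proof complete.
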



As in the Poisson case, Theorem \ref{thm:MainBinomial} allows a re-formulation as in Corollary \ref{corol:Main} for the special situation in which $f$ is non-negative and $\nu$ has a power-law density. As above, we use the convention that $\alpha_n(y):=\alpha_n(0,y)$ for $y\geq 0$.

\begin{corollary}\label{corol:MainBinomial}
Let $\beta,\tau>0$. Then there is a constant $C>0$ only depending on $k$ such that
\begin{equation*}
\begin{split}
\bigg| \PP(n^\gamma\widehat{M}_n^{(m)}>y)-e^{-\beta y^\tau} \sum_{i=0}^{m-1} \frac{(\beta y^\tau)^i}{i!}\bigg|\leq |\beta y^\tau-\alpha_n(y)|+C\Big(r_n(y)+{\alpha_n(y)\over n}\Big)
\end{split}
\end{equation*}
for all $m\in\NN$ and $y\geq 0$. If, additionally,
\begin{equation*}
\lim_{n\to\infty} \alpha_n(y)=\beta y^\tau \quad \text{ and } \quad  \lim_{n\to\infty} r_n(y)=0\qquad\text{for\ all}\qquad y>0\,,
\end{equation*}
the rescaled point processes $(n^\gamma\widehat{\xi}_n)_{n\geq 1}$ converge in distribution to a Poisson point process on $\RR_+$ with intensity measure given by \eqref{eqn:nu}.
\end{corollary}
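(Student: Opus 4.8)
The plan is to deduce Corollary \ref{corol:MainBinomial} directly from Theorem \ref{thm:MainBinomial} by specializing $\nu$ to the power-law measure in \eqref{eqn:nu}. First I would record that $\nu(B)=\beta\tau\int_B u^{\tau-1}\,\dint u$ is a $\sigma$-finite Borel measure on $\RR$ concentrated on $\RR_+$, and that it is non-atomic because it is absolutely continuous with respect to Lebesgue measure; hence $\nu$ meets the standing hypotheses of Theorem \ref{thm:MainBinomial}. A one-line computation gives $\nu((0,y])=\beta\tau\int_0^y u^{\tau-1}\,\dint u=\beta y^\tau$ for every $y\geq 0$.

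Substituting $\nu((0,y])=\beta y^\tau$ into the first inequality of Theorem \ref{thm:MainBinomial} and using the convention $\alpha_n(y)=\alpha_n(0,y)$ yields verbatim the claimed bound, with the same constant $C$ depending only on $k$. Since $f$ is assumed non-negative, $\widehat{\xi}_n$ charges only $\RR_+$, so the second (negative half-line) inequality of Theorem \ref{thm:MainBinomial} contributes nothing and no separate estimate is required.

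For the convergence assertion I would verify that the two limit hypotheses of Theorem \ref{thm:MainBinomial} follow from the assumptions of the corollary. The condition $r_n(y)\to 0$ is assumed verbatim. To obtain $\alpha_n(y_1,y_2)\to\nu((y_1,y_2])$ for arbitrary $y_1<y_2$, I would split into cases using $f\geq 0$: when $y_2\leq 0$ both sides vanish; when $0\leq y_1<y_2$ one writes $\alpha_n(y_1,y_2)=\alpha_n(y_2)-\alpha_n(y_1)\to\beta(y_2^\tau-y_1^\tau)=\nu((y_1,y_2])$; and when $y_1<0\leq y_2$ non-negativity of $f$ gives $\alpha_n(y_1,y_2)=\alpha_n(y_2)\to\beta y_2^\tau=\nu((y_1,y_2])$, the last equality using $\nu((y_1,0])=0$. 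Feeding these into Theorem \ref{thm:MainBinomial} gives the asserted convergence of $(n^\gamma\widehat{\xi}_n)_{n\geq 1}$ to the Poisson point process with intensity $\nu$.

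There is essentially no hard step: the corollary is a pure specialization of Theorem \ref{thm:MainBinomial}. The only point requiring a moment's care is the elementary case analysis deducing the two-point convergence of $\alpha_n(y_1,y_2)$ from the one-sided convergence $\alpha_n(y)\to\beta y^\tau$, which hinges on the non-negativity of $f$ and on $\nu$ placing no mass at the origin.
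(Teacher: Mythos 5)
Your proposal is correct and takes essentially the same route as the paper: there the corollary is proved in one line by specializing Theorem \ref{thm:MainBinomial} to the measure $\nu$ of \eqref{eqn:nu} (the paper explicitly skips the details, referring to the proof of Corollary \ref{corol:Main}). The details you supply---non-atomicity of $\nu$, the computation $\nu((0,y])=\beta y^\tau$, and the case analysis upgrading $\alpha_n(y)\to\beta y^\tau$ to two-sided intervals via $f\geq 0$---are exactly the routine verifications the paper leaves implicit.
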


\section{Examples}\label{sec:Examples}

In this section we apply the results presented above to problems arising in stochastic geometry. The minimal nucleus-centred inradius of the cells of a Voronoi tessellation is considered in Section \ref{subsec:Voronoi}. This example is inspired by the work \cite{CalkaChenavier} and was not previously considered in \cite{STScalingLimits}, although it is closely related to the minimal edge length of the random geometric graph discussed in the introduction. Our next example generalizes Example 6 of \cite{STScalingLimits} from the translation-invariant case to arbitrary distance parameters $r\geq 1$. In dimension two it also sheds some new light onto the area of small cells in line tessellations. Our third example is inspired by a result in \cite{SilvermanBrown} and deals with approximate collinearities and flat triangles induced by a planar Poisson or binomial point process. 
Our last example deals with non-intersecting $k$-flats. The result generalizes Example 1 in \cite{STScalingLimits} and one of the results in \cite{STFlats} to arbitrary distance powers $a>0$.

\subsection{Voronoi tessellations}\label{subsec:Voronoi}

For a finite set $\chi\neq\emptyset$ of points in $\RR^d$, $d\geq 2$, the Voronoi cell $v(x,\chi)$ with nucleus $x\in\chi$ is the (possibly unbounded) set $$v(x,\chi)=\big\{z\in\RR^d:\|x-z\|\leq\|x'-z\|\ \text{for all}\ x'\in\chi\setminus\{x\}\big\}$$ of all points in $\RR^d$ having $x$ as their nearest neighbour in $\chi$. 
The family $$\cV_\chi=\{v(x,\chi):x\in\chi\}$$ subdivides $\RR^d$ into a finite number of random polyhedra, which form the so-called Voronoi tessellation associated with $\chi$, see \cite[Chapter 10.2]{SW}. For $\chi=\emptyset$ we put $\cV_\emptyset=\{\RR^d\}$. One characteristic measuring the size of a Voronoi cell $v(x,\chi)$ is its nucleus-centred inradius $R(x,\chi)$. It is defined as the radius of the largest ball included in $v(x,\chi)$ and having $x$ as its midpoint. Note that $R(x,\chi)$ takes the value $\infty$ if $\chi=\{x\}$. Define
$$
R(\cV_\chi):=\min\{R(x,\chi):x\in\chi\}
$$
for non-empty $\chi$ and $R(\cV_\emptyset):=\infty$.

In \cite{CalkaChenavier} the asymptotic behaviour of $R(\cV_\chi)$ has been investigated in the case that $\chi$ is a Poisson point process in a convex body $K$ of intensity $t>0$, as $t\to\infty$. Using Corollary \ref{corol:Main} we can get back one of the main results of \cite{CalkaChenavier} and add a rate of convergence to the limit theorem (compare with \cite[Equation (2b)]{CalkaChenavier} in particular). Moreover, we provide a similar result for an underlying binomial point process.

\begin{corollary}\label{corol:Voronoi}
Let $\eta_t$ be a Poisson point process with intensity measure $t\ell_d|_K$, where $\ell_d|_K$ stands for the restriction of the Lebesgue measure to a convex body $K$ and $t>0$. Then, there exists a constant $C>0$ depending on $K$ such that
$$
\left|\PP\big(t^{2/d} R(\cV_{\eta_t})>y\big)-e^{-2^{d-1}\kappa_d\ell_d(K)y^d}\right|\leq C\,t^{-2/d} \max\{y^{d+1},y^{2d}\}
$$
for all $y\geq 0$ and $t\geq 1$. In addition, if $\zeta_n$ is a binomial point process with $n\geq 2$ independent points distributed according to $\ell_d(K)^{-1}\,\ell_d|_K$, then
$$
\left|\PP\big(n^{2/d} R(\cV_{\zeta_n})>y\big)-e^{-2^{d-1}\kappa_d\ell_d(K)y^d}\right|\leq C\,n^{-2/d} \max\{y^d,y^{2d}\}
$$
for $y\geq 0$ and with a constant $C>0$ depending on $K$.
\end{corollary}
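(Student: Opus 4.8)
The plan is to deduce both estimates from Corollary \ref{corol:Main} and Corollary \ref{corol:MainBinomial} with $k=2$, once the inradius is rewritten as a functional of pairs of points. The crucial geometric observation is that the nucleus-centred inradius of a single cell has an elementary closed form. Indeed, $v(x,\chi)=\bigcap_{x'\in\chi\setminus\{x\}}\{z:\|z-x\|\le\|z-x'\|\}$ is an intersection of halfspaces, each bounded by the perpendicular bisector of $x$ and $x'$, which lies at distance $\tfrac12\|x-x'\|$ from $x$. Hence the largest ball centred at $x$ contained in $v(x,\chi)$ has radius $R(x,\chi)=\tfrac12\min_{x'\in\chi\setminus\{x\}}\|x-x'\|$, and taking the minimum over all nuclei gives
\[
R(\cV_\chi)=\tfrac12\min_{x,x'\in\chi,\,x\neq x'}\|x-x'\|,
\]
that is, half the smallest interpoint distance. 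Therefore, setting $k=2$ and $f(x_1,x_2)=\tfrac12\|x_1-x_2\|$ (a measurable, symmetric, positive function, with $\mu_t^2(f^{-1}([-s,s]))<\infty$ since $K$ is bounded), the random variable $R(\cV_{\eta_t})$ coincides exactly with the smallest point $M_t^{(1)}$ of the induced process $\xi_t$ from \eqref{eq:defXit}, and likewise $R(\cV_{\zeta_n})=\widehat M_n^{(1)}$ in the binomial case.

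With this identification I would apply Corollary \ref{corol:Main} with $m=1$, $\gamma=2/d$, $\tau=d$ and $\beta=2^{d-1}\kappa_d\ell_d(K)$, for which the bound reads $|\PP(t^{2/d}R(\cV_{\eta_t})>y)-e^{-\beta y^d}|\le|\beta y^d-\alpha_t(y)|+C\,r_t(y)$. Everything then reduces to evaluating $\alpha_t(y)$ and bounding the remainder $r_t(y)$. By definition, $\alpha_t(y)=\tfrac{t^2}{2}\int_K \ell_d\big(K\cap B(x,2t^{-2/d}y)\big)\,\dint x$, where $B(x,\rho)$ denotes the ball of radius $\rho$ about $x$. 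For $x$ far from $\partial K$ the inner volume equals $\kappa_d(2t^{-2/d}y)^d$, and integrating this constant over $K$ produces exactly $\tfrac12\kappa_d2^d\ell_d(K)\,y^d=\beta y^d$. The deviation stems only from the boundary layer $\{x\in K:\operatorname{dist}(x,\partial K)\le 2t^{-2/d}y\}$, whose volume is $O(t^{-2/d}y)$ for convex $K$; since the integrand there is at most $\kappa_d(2t^{-2/d}y)^d$, this yields $|\beta y^d-\alpha_t(y)|\le C\,t^{-2/d}y^{d+1}$.

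For the remainder, since $k=2$ only $\ell=1$ contributes and $r_t(y)=t\int_K\big(t\,\ell_d(K\cap B(x,2t^{-2/d}y))\big)^2\,\dint x$; bounding the inner volume by $\kappa_d2^dt^{-2}y^d$ gives $r_t(y)\le C\,t^{-1}y^{2d}\le C\,t^{-2/d}y^{2d}$, where the last inequality uses $d\ge 2$. Adding the two contributions gives the asserted Poisson rate $C\,t^{-2/d}\max\{y^{d+1},y^{2d}\}$. The binomial statement follows along the same lines from Corollary \ref{corol:MainBinomial} applied to the probability measure $\mu=\ell_d(K)^{-1}\ell_d|_K$: the analogous computations show $\alpha_n(y)\to\beta y^d$ with a boundary deviation of order $n^{-2/d}y^{d+1}$ and $r_n(y)=O(n^{-2/d}y^{2d})$, while the extra term $\alpha_n(y)/n$ present in the binomial bound is of order $n^{-2/d}y^d$; since $\max\{y^d,y^{2d}\}\ge y^{d+1}$ for all $y\ge0$, the total error collapses to $C\,n^{-2/d}\max\{y^d,y^{2d}\}$.

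I expect the geometric reduction and the leading-order asymptotics to be routine; the main technical obstacle is controlling the boundary correction \emph{uniformly in $y\ge0$}, in particular in the regime where $2t^{-2/d}y$ is comparable to or exceeds the diameter of $K$. There the ball $B(x,2t^{-2/d}y)$ is no longer contained in $K$ for any $x$, so the clean Steiner-type estimate for the boundary layer must be replaced by the crude bounds $\alpha_t(y)\le\tfrac12 t^2\ell_d(K)^2$ and $\alpha_t(y)\le\beta y^d$, which are absorbed by the $y^{2d}$ term once one notes that large $y$ forces $t^{-2/d}y^{2d}$ to dominate. Making this case distinction rigorous, so that the single bound holds for every $y\ge0$ and $t\ge1$ with a constant depending only on $K$, is the part that requires the most care.
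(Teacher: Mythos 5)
Your Poisson argument is correct and follows essentially the same route as the paper: the identification $R(\cV_\chi)=\frac12\min\{\|x-x'\|:x,x'\in\chi,\,x\neq x'\}$, the choice $k=2$, $f(x_1,x_2)=\frac12\|x_1-x_2\|$, $\gamma=2/d$, $\tau=d$, $\beta=2^{d-1}\kappa_d\ell_d(K)$, and the remainder bound $r_t(y)\leq t^3\ell_d(K)\,(\kappa_d 2^d t^{-2}y^d)^2\leq C\,t^{-1}y^{2d}\leq C\,t^{-2/d}y^{2d}$ are exactly the paper's steps.

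The one place where you genuinely diverge is the control of $|\alpha_t(y)-\beta y^d|$. You write $\alpha_t(y)$ as $\beta y^d$ minus an \emph{inner} boundary-layer correction, and therefore need $\ell_d(\{x\in K:\operatorname{dist}(x,\partial K)\leq\rho\})=O(\rho)$ \emph{uniformly} in $\rho$ --- precisely the point you flag as delicate and patch with a case distinction for large $y$. The paper avoids this altogether: it writes
$$
\alpha_t(y)=\frac{t^2}{2}\int_{\RR^d}\ell_d\big(K\cap B^d_{2yt^{-2/d}}(x)\big)\,\dint x-\frac{t^2}{2}\int_{\RR^d\setminus K}\ell_d\big(K\cap B^d_{2yt^{-2/d}}(x)\big)\,\dint x\,,
$$
where the first integral equals $\beta y^d$ \emph{exactly} by the translative formula (Theorem 5.2.1 in \cite{SW}), and the second is at most $\frac{\kappa_d}{2}t^2(2yt^{-2/d})^d$ times the volume of the \emph{outer} parallel layer, which Steiner's formula expands as $\sum_{j=0}^{d-1}\kappa_{d-j}V_j(K)(2yt^{-2/d})^{d-j}$ for \emph{every} radius. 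All resulting terms are at most $c\,t^{-2/d}\max\{y^{d+1},y^{2d}\}$ for $t\geq 1$, so the bound holds uniformly in $y\geq 0$ in one stroke; the higher powers of $y$ you were worried about are exactly the higher-order Steiner terms. Your route also closes --- either via your case distinction (which works) or via the standard fact that the inner layer of a convex body has volume at most $S(K)\rho$ (coarea formula plus monotonicity of surface area under inclusion) --- but the outer/Steiner decomposition is the cleaner of the two.

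There is, however, one genuine defect in the binomial half, which you inherit from the statement itself and which the paper's proof also glosses over with ``one can derive analogous bounds'': with the \emph{probability} measure $\mu=\ell_d(K)^{-1}\ell_d|_K$ one has $\alpha_n(y)=\frac{n(n-1)}{2\,\ell_d(K)^2}\int_K\ell_d\big(K\cap B^d_{2n^{-2/d}y}(x)\big)\,\dint x$, whose leading term is $\frac{n-1}{n}\,2^{d-1}\kappa_d\,\ell_d(K)^{-1}y^d$. So $\alpha_n(y)\to 2^{d-1}\kappa_d\,\ell_d(K)^{-1}y^d$, not $\beta y^d$; your assertion ``$\alpha_n(y)\to\beta y^d$'' (and the exponent $2^{d-1}\kappa_d\ell_d(K)y^d$ in the binomial display) is correct only when $\ell_d(K)=1$. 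Apart from this constant, your accounting is right: the $(n)_2/n^2$ correction and the extra term $\alpha_n(y)/n$ from Corollary \ref{corol:MainBinomial} are both $O(n^{-1}y^d)\leq O(n^{-2/d}y^d)$, which is exactly what produces $y^d$ in place of $y^{d+1}$ in the binomial rate.
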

\begin{proof}
To apply Corollary \ref{corol:Main} we first have to investigate $\alpha_t(y)$ for fixed $y>0$. For this we abbreviate $\cV_{\eta_t}$ by $\cV_t$ and observe that -- by definition of a Voronoi cell -- $R(\cV_t)$ is half of the minimal interpoint distance of points from $\eta_t$, i.e.
$$
R(\cV_t)=\frac{1}{2}\,\min\big\{\|x_1-x_2\|:(x_1,x_2)\in\eta_{t,\neq}^2\}\,.
$$
Consequently, we have
\begin{eqnarray*}
\alpha_t(y) 
&=& \frac{t^2}{2}\int_K\int_K\I\{\|x_1-x_2\|\leq 2yt^{-\gamma}\}\,\dint x_2 \, \dint x_1\\
&=& \frac{t^2}{2}\int_{\RR^d}\ell_d(K\cap B_{2yt^{-\gamma}}^d(x_1))\,\dint x_1- \frac{t^2}{2}\int_{\RR^d\setminus K}\ell_d(K\cap B_{2yt^{-\gamma}}^d(x_1))\,\dint x_1\,,
\end{eqnarray*}
where $B_r^d(x)$ is the $d$-dimensional ball of radius $r>0$ around $x\in\RR^d$. From Theorem 5.2.1 in \cite{SW} (see Equation (5.14) in particular) it follows that
$$
\frac{t^2}{2}\int_{\RR^d}V_d(K\cap B_{2yt^{-\gamma}}^d(x_1))\,\dint x_1=\frac{t^2}{2}\,\ell_d(K)\,\kappa_d(2yt^{-\gamma})^d=2^{d-1}\ell_d(K)\kappa_dy^dt^{2-\gamma d}\,.
$$
Moreover, Steiner's formula \cite[Equation (14.5)]{SW} yields
\begin{eqnarray*}
&&\frac{t^2}{2}\int_{\RR^d\setminus K}\ell_d(K\cap B_{2yt^{-\gamma}}^d(x_1))\,\dint x_1\\
&&\qquad \leq \frac{\kappa_d}{2}\,t^2(2yt^{-\gamma})^d\,\ell_d(\{z\in\RR^d\setminus K:\inf_{z'\in K}\|z-z'\|\leq 2yt^{-\gamma}\})\\
&&\qquad = \frac{\kappa_d}{2}t^2(2yt^{-\gamma})^d\,\sum_{j=0}^{d-1}\kappa_{d-j}V_j(K)(2yt^{-\gamma})^{d-j}\,,
\end{eqnarray*}
where $V_0(K),\ldots,V_{d-1}(K)$ are the so-called intrinsic volumes of $K$, cf.\ \cite{SW}. Choosing $\gamma=2/d$, this implies that $\alpha_t(y)$ is dominated by its first integral term and that
$$
\big|\alpha_t(y)-2^{d-1}\kappa_d \ell_d(K) y^d\big|\leq c_1\,t^{-2/d} \max\{y^{d+1},y^{2d}\}
$$
for $t\geq 1$ with a constant $c_1$ only depending on $K$.

Finally, we have to deal with $r_t(y)$. Here, we have
\begin{align*}
r_t(y) &= t^3 \int_K\bigg(\int_K\I\{\|x-y\|\leq 2yt^{-\gamma}\} \, \dint y\bigg)^2 \,\dint x\\
& \leq  t^{3}\ell_d(K) \, (t^{-2}\kappa_d 2^dy^{d})^2 = \ell_d(K) \, 4^d \kappa_d^2 y^{2d} t^{-1}\,.
\end{align*}
In the binomial case, one can derive analogous bounds for $\alpha_n(y)$ and $r_n(y)$, $y>0$.
Since $\min(2/d,1)=2/d$ for all $d\geq 2$, application of Corollary \ref{corol:Main} and Corollary \ref{corol:MainBinomial} completes the proof.\hfill $\Box$
\end{proof}

\begin{remark}\label{rem:VoronoiGraph}
We have used in the proof that $R(\cV_{\eta_t})$ is half of the minimal inter-point distance between points of $\eta_t$ in $K$. Thus, Corollary \ref{corol:Voronoi} also makes a statement about this minimal inter-point distance. Consequently, $2R(\cV_{\eta_t})$ is also the same as the shortest edge length of a random geometric graph based on $\eta_t$ as discussed in the introduction (cf.\ \cite{RSTGilbert} and \cite{PenroseBook} for an exhaustive reference on random geometric graphs) or as the shortest edge length of a Delaunay graph (see \cite{SKM,SW} for background material on Delaunay graphs or tessellations). A similar comment applies if $\eta_t$ is replaced by a binomial point process $\zeta_n$.
\end{remark}

\subsection{Hyperplane tessellations}\label{subsec:Hyperplanes}

Let $\cH$ be the space of hyperplanes in $\RR^d$, fix a distance parameter $r\geq 1$ and a convex body $K\subset\RR^d$, and define a (finite) measure $\mu$ on $\cH$ by the relation
$$
\int_{\cH}g(H)\,\mu(\dint H) = \int_{\SS^{d-1}}\int_{0}^\infty g(u^\perp+p u)\,\I\{(u^\perp+p u)\cap K\neq\emptyset\} \, p^{r-1}\,\dint p\,\dint u\,,
$$
where $g\geq 0$ is a measurable function on $\cH$, $u^\perp$ is the linear subspace of all vectors that are orthogonal to $u$, and $\dint u$ stands for the infinitesimal element of the normalized Lebesgue measure on the $(d-1)$-dimensional unit sphere $\SS^{d-1}$. By $\eta_t$ we mean in this section a Poisson point process on $\cH$ with intensity measure $\mu_t:=t\mu$, $t>0$. Let us further write for $n\in\NN$ with $n\geq d+1$, $\zeta_n$ for a binomial process on $\cH$ consisting of $n\in\NN$ hyperplanes distributed according to the probability measure $\mu(\cH)^{-1}\,\mu$.

If $K=\RR^d$ in the Poisson case, one obtains a tessellation of the whole $\RR^d$ into bounded cells. In this context one is interested in the so-called zero cell $Z_0$, which is the almost surely uniquely determined cell containing the origin. If $r=1$, $Z_0$ has the same distribution as the zero-cell of a rotation- {and} translation-invariant Poisson hyperplane tessellation. If $r=d$, $Z_0$ is equal in distribution to the so-called typical cell of a Poisson-Voronoi tessellation as considered in the previous section, see \cite{SW}. Thus, the tessellation induced by $\eta_t$ interpolates in some sense between the translation-invariant Poisson hyperplane and the Poisson-Voronoi tessellation, which explains the recent interest in this model \cite{HugHoermann,HHRT,HugSchneider}. For more background material about random tessellations (and in particular Poisson hyperplane and Poisson-Voronoi tessellations) we refer to Chapter 10 in \cite{SW} and Chapter 9 in \cite{SKM}.

We are interested here in the simplices generated by the hyperplanes of $\eta_t$ or $\zeta_n$, which are contained in the prescribed convex set $K$. For a $(d+1)$-tuple $(H_1,\ldots,H_{d+1})$ of distinct hyperplanes of $\eta_t$ or $\zeta_n$ let us write $[H_1,\ldots,H_{d+1}]$ for the simplex generated by $H_1,\ldots,H_{d+1}$ and define the point processes
$$\xi_t:={1\over(d+1)!}\sum_{(H_1,\ldots,H_{d+1})\in\eta_{t,\neq}^{d+1}}\delta_{\ell_d([H_1,\ldots,H_{d+1}])}\,\I\{[H_1,\ldots,H_{d+1}]\subset K\}$$
and
$$\widehat{\xi}_n:={1\over(d+1)!}\sum_{(H_1,\ldots,H_{d+1})\in\zeta_{n,\neq}^{d+1}}\delta_{\ell_d([H_1,\ldots,H_{d+1}])}\,\I\{[H_1,\ldots,H_{d+1}]\subset K\}\,.$$
By $M_t^{(m)}$ and $\widehat{M}_n^{(m)}$ we mean the $m$th order statistics associated with $\xi_t$ and $\widehat{\xi}_n$, respectively. In particular $M_t^{(1)}$ and $\widehat{M}_n^{(1)}$ are the smallest volume of a simplex included in $K$. Moreover, for fixed hyperplanes $H_1,\ldots,H_d$ in general position let $z(H_1,\hdots,H_d):=H_1\cap\ldots\cap H_d$ be the intersection point of $H_1,\hdots,H_d$. By $H_{\delta,u}$ we denote the hyperplane with unit normal vector $u\in\SS^{d-1}$ and distance $\delta>0$ to the origin. The following result generalizes \cite[Theorem 2.6]{STScalingLimits} from the translation-invariant case $r=1$ to arbitrary distance parameter $r\geq 1$.

\begin{corollary}\label{Corol:Hyperplanes}
Define
\begin{equation*}
\begin{split}
\beta:=\frac{1}{(d+1)!}&\int_{\cH^d}\int_{\SS^{d-1}}\I\{H_1\cap\ldots\cap H_d\cap K\neq\emptyset\}\,|u^Tz(H_1,\ldots,H_d)|^{r-1}\\
&\times \ell_d([H_1,\ldots,H_d,z(H_1,\hdots,H_d)+H_{1,u}])^{-1/d} \, \dint u\,\mu^d(\dint (H_1,\ldots,H_d))\,.
\end{split}
\end{equation*}
Then $t^{{d}(d+1)}\xi_t$ and $n^{{d}(d+1)}\widehat{\xi}_n$ converge, as $t\to\infty$ or $n\to\infty$, in distribution to a Poisson point process on $\RR_+$ with intensity measure given by $$B\mapsto \frac{\beta}{d}\int_B u^{(1-d)/d}\,\dint u$$ for Borel sets $B\subset\RR_+$. In particular, for each $m\in\NN$, $t^{{d}(d+1)}M_t^{(m)}$ and $n^{{d}(d+1)}\widehat{M}_n^{(m)}$ converge towards a random variable with survival function
$$
y\mapsto\exp\big(-{\beta}\,y^{1/d}\big)\sum_{i=0}^{m-1}\frac{(\beta y^{1/d})^i}{i!}, \quad y\geq 0\,\,.
$$
\end{corollary}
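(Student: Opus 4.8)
The plan is to read the statement off Corollary~\ref{corol:Main} and Corollary~\ref{corol:MainBinomial}, applied with $k=d+1$, with the symmetric function $f(H_1,\ldots,H_{d+1})=\ell_d([H_1,\ldots,H_{d+1}])\,\I\{[H_1,\ldots,H_{d+1}]\subset K\}$ (symmetric because the simplex volume does not depend on the order of its facets, and obeying the required local finiteness since $\mu$ is finite), and with scaling exponent $\gamma=d(d+1)$. Matching the intensity \eqref{eqn:nu} with the one in the statement forces $\beta\tau=\beta/d$ and $\tau-1=(1-d)/d$, i.e.\ $\tau=1/d$, and then the displayed survival function is exactly the Weibull form of Corollary~\ref{corol:Main}. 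Everything therefore reduces to the two limits in \eqref{eqn:MainAssumptionIII}, namely $\alpha_t(y)\to\beta y^{1/d}$ and $r_t(y)\to0$, together with their binomial analogues.

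First I would determine $\lim_{t\to\infty}\alpha_t(y)$. The geometric heart is a homothety: if $H_1,\ldots,H_d$ are in general position and meet in the point $z=z(H_1,\ldots,H_d)$, and if $H_{d+1}$ has unit normal $u$ and signed distance $s$ from $z$, then $[H_1,\ldots,H_{d+1}]$ is the image of the reference simplex $[H_1,\ldots,H_d,z+H_{1,u}]$ under the homothety of centre $z$ and ratio $s$ (the first $d$ hyperplanes pass through the centre, hence are preserved), so that $\ell_d([H_1,\ldots,H_{d+1}])=|s|^d\,\ell_d([H_1,\ldots,H_d,z+H_{1,u}])$. I would split $\mu_t^{d+1}=\mu_t^d\otimes\mu_t$, integrate the innermost hyperplane $H_{d+1}$ using the definition of $\mu$, and substitute $p=u^Tz+s$. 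The volume constraint becomes $|s|\le\big(y\,t^{-\gamma}/\ell_d([H_1,\ldots,H_d,z+H_{1,u}])\big)^{1/d}$, an interval that shrinks to $\{0\}$ as $t\to\infty$; hence $p^{r-1}\to|u^Tz|^{r-1}$, the indicator $\I\{[\ldots]\subset K\}\to\I\{z\in K\}=\I\{H_1\cap\ldots\cap H_d\cap K\neq\emptyset\}$, and integrating over the (two-sided) $s$-interval while letting the full sphere $\SS^{d-1}$ account for the two sides of $z$ reproduces the angular integral in the definition of $\beta$. Since $\gamma=d(d+1)$ gives $\frac{t^{d+1}}{(d+1)!}(y\,t^{-\gamma})^{1/d}=\frac{y^{1/d}}{(d+1)!}$, a dominated-convergence argument then yields $\alpha_t(y)\to\beta y^{1/d}$. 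One must of course check $\beta<\infty$, i.e.\ that the reciprocal reference volume is $\mu^d$-integrable; the truncation to simplices meeting the bounded body $K$ and the exponent $-1/d>-1$ are what secure this.

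Next I would show $r_t(y)\to0$. Here it is crucial to estimate $r_t$ itself rather than the coarser $\hat r_t$ from the remark comparing the two: $\hat r_t$ takes a supremum over the $\ell$ fixed hyperplanes, and for nearly degenerate fixed configurations that supremum is not small, whereas $r_t$ integrates those configurations out. Writing $r_t^{(\ell)}$ for the $\ell$-th member of the maximum defining $r_t$, in its inner integral I would single out one of the $d+1-\ell$ free hyperplanes, group the remaining $d-\ell$ free ones with the $\ell$ fixed ones into $d$ hyperplanes meeting at a point $z$, and apply the homothety estimate of the previous step to get $\int_{\cH}\I\{\ell_d([H_1,\ldots,H_{d+1}])\le t^{-\gamma}y\}\,\mu(\dint H_{d+1})\le C\,(t^{-\gamma}y)^{1/d}\,W(H_1,\ldots,H_d)$, where $W(H_1,\ldots,H_d):=\int_{\SS^{d-1}}\ell_d([H_1,\ldots,H_d,z+H_{1,u}])^{-1/d}\,\dint u$ is the shape factor (it dominates the integrand of $\beta$, since $|u^Tz|^{r-1}$ is bounded on configurations meeting $K$). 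Squaring, applying the Cauchy--Schwarz inequality in the $d-\ell$ integrated-out variables, and counting powers of $t$ gives
$$
r_t^{(\ell)}(y)\le C\,t^{2(d+1-\ell)+\ell}\,(t^{-\gamma}y)^{2/d}\,\|W\|_{L^2(\mu^d)}^2=C'\,y^{2/d}\,\|W\|_{L^2(\mu^d)}^2\,t^{-\ell}\,,
$$
so that $r_t(y)=\max_{1\le\ell\le d}r_t^{(\ell)}(y)=O(t^{-1})\to0$, which in passing also furnishes a rate.

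The main obstacle is the norm appearing in that display: the argument needs the shape factor $W$ to be \emph{square}-integrable against $\mu^d$, which is strictly stronger than the first-moment finiteness $\int_{\cH^d}W\,\mu^d<\infty$ that underlies $\beta<\infty$. Establishing $\int_{\cH^d}W^2\,\mu^d<\infty$ is a genuine geometric estimate: one must quantify how fast the measure of $d$-tuples of hyperplanes shrinks as their reference simplex degenerates and verify that it outweighs the blow-up of the $-2/d$ power, and it is again the restriction to hyperplanes meeting the bounded body $K$ that keeps this finite. Granting this, both limits in \eqref{eqn:MainAssumptionIII} hold and Corollary~\ref{corol:Main} finishes the Poisson case. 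The binomial statement follows in the same way from Corollary~\ref{corol:MainBinomial}: the descending factorials $(n)_{d+1}$ and $(n)_{2(d+1)-\ell}$ differ from the corresponding powers of $n$ only by lower-order factors, so the same two limits hold with $t$ replaced by $n$, and the additional error term $\alpha_n(y)/n=O(n^{-1})$ is negligible.
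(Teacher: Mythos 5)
Your reduction to Corollaries \ref{corol:Main} and \ref{corol:MainBinomial} with $k=d+1$, $\gamma=d(d+1)$, $\tau=1/d$, and your treatment of $\alpha_t(y)$ --- the parametrization of $H_{d+1}$ by $(\delta,u)$, the homothety identity $\ell_d([H_1,\ldots,H_d,H_{\delta,u}])=|u^Tz-\delta|^d\,\ell_d([H_1,\ldots,H_d,z+H_{1,u}])$, and dominated convergence --- coincide with the paper's proof. So far, so good; but everything then hinges on the integrability questions you defer.

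The genuine gap is the step you label ``granting this'', and it cannot be granted, because in the form your argument needs it the statement is false. Your bound for $r_t^{(\ell)}$ integrates out $H_{d+1}$ using only $\I\{\ell_d([H_1,\ldots,H_{d+1}])\le t^{-\gamma}y\}$ and then applies Cauchy--Schwarz, so at that point the constraint that the simplex lies in $K$ --- equivalently that $z=H_1\cap\ldots\cap H_d$ lies in $K$ --- has been discarded, and what you need is $\int_{\cH^d}W^2\,\dint\mu^d<\infty$ over \emph{all} $d$-tuples of hyperplanes meeting $K$. For $d=2$ this diverges: two lines meeting $K$ at angle $\varepsilon$ have reference-triangle area $\sin\varepsilon/(2\langle e_1,u\rangle\langle e_2,u\rangle)$ (with $e_1,e_2$ the directions of the lines), hence $W\asymp\varepsilon^{-1/2}$ \emph{independently of where they intersect}, while the $\mu^2$-measure of pairs of lines meeting $K$ with angle of order $\varepsilon$ has density bounded below in $\varepsilon$ (two nearly parallel lines may both cross $K$ at no cost); therefore $\int_{\cH^2}W^2\,\dint\mu^2\gtrsim\int_0^1\varepsilon^{-1}\,\dint\varepsilon=\infty$. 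What rescues the second moment is exactly the indicator $\I\{H_1\cap\ldots\cap H_d\cap K\neq\emptyset\}$, which the paper keeps inside its estimate of $r_t(y)$: forcing the intersection point into the bounded set $K$ costs an extra factor of order $\sin\varepsilon$ in the measure of near-parallel pairs (the admissible translates of the second line shrink proportionally to the angle), and with that factor the integral converges. So your route must be repaired, not just completed: carry the intersection indicator through the squaring step, and then actually prove the measure-decay-versus-blow-up estimate --- this is the geometric heart of the corollary, and the paper addresses it by partitioning $\SS^{d-1}$ into the $2^d$ caps cut out by $H_1-z,\ldots,H_d-z$ and bounding the reference-simplex volume from below by $c_d$ times the measure of the cap spanning the cone from which the simplex is cut, which controls $\int_{\SS^{d-1}}\ell_d([\cdots])^{-1/d}\,\dint u$ via cap measures raised to the power $(d-1)/d$. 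You offer no substitute for this ingredient; note also that the same lacuna already affects your claims that $\beta<\infty$ and that dominated convergence applies in the $\alpha_t$ step, where the remark ``$-1/d>-1$'' proves nothing, since the blow-up occurs in the configuration variables $(H_1,\ldots,H_d)$, not in the spherical variable $u$.
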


\begin{proof}
For $y>0$ we have
\begin{equation*}
\begin{split}
\alpha_t(y) = \frac{t^{d+1}}{(d+1)!}\int_{\cH^{d+1}} & \I\{[H_1,\ldots,H_{d+1}]\subset K\}\\
&\times\I\{\ell_d([H_1,\ldots,H_{d+1}])\leq yt^{-\gamma}\}\,\mu^{d+1}(\dint(H_1,\ldots,H_{d+1}))\,.
\end{split}
\end{equation*}
For fixed hyperplanes $H_1,\ldots,H_d$ in general position we parametrize $H_{d+1}$ by a pair $(\delta,u)\in[0,\infty)\times\SS^{d-1}$, where $\delta$ is the distance of $H_{d+1}$ to the origin. Then $\alpha_t(y)$ can be re-written as
\begin{equation}\label{eq:alphaHyperplanes}
\begin{split}
\alpha_t(y) &= \frac{1}{2(d+1)!}\int_{\cH^d}\int_{\SS^{d-1}}\int_{-\infty}^\infty t^{d+1}\I\{[H_1,\ldots,H_d,H_{\delta,u}]\subset K\}\\
& \quad \times\I\{\ell_d([H_1,\ldots,H_d,H_{\delta,u}])\leq yt^{-\gamma}\} |\delta|^{r-1}\,\dint\delta \, \dint u \, \mu^d(\dint(H_1,\ldots,H_d))\,.
\end{split}
\end{equation}
Since the hyperplane $H_{\delta,u}$ has the distance $|u^T z(H_1,\hdots,H_d)-\delta|$ to $z(H_1,\hdots,H_d)$, we have that
\begin{align*}
&\ell_d([H_1,\ldots,H_d,H_{\delta,u}])\\
&\qquad=|u^T z(H_1,\hdots,H_d)-\delta|^d\,\ell_d([H_1,\ldots,H_d,z(H_1,\hdots,H_d)+H_{1,u}])\,.
\end{align*}
Let $\gamma=d(d+1)$ and $M:=\max\{\|z\|^{r-1}:z\in K\}$. For fixed $H_1,\hdots,H_d\in \cH^d$ such that $H_1\cap\hdots\cap H_d\cap K\neq \emptyset$ and $u\in\SS^{d-1}$ we can estimate the inner integral in \eqref{eq:alphaHyperplanes} from above by
\begin{equation*}
\begin{split}
& M\int_{-\infty}^\infty t^{d+1} \I\{|u^T z(H_1,\hdots,H_d)-\delta|^d\\
& \hskip 3cm \ell_d([H_1,\ldots,H_d,z(H_1,\hdots,H_d)+H_{1,u}])\leq yt^{-\gamma}\}\,\dint\delta \\
& \leq 2 M\, \ell_d([H_1,\ldots,H_d,z(H_1,\hdots,H_d)+H_{1,u}])^{-1/d}\, y^{1/d}.
\end{split}
\end{equation*}
The hyperplanes $H_1-z(H_1,\hdots,H_d),\ldots,H_d-z(H_1,\hdots,H_d)$ partition the unit sphere $\SS^{d-1}$ into $2^d$ spherical caps $S_1,\ldots,S_{2^d}$. For each $u\in S_j$ ($1\leq j\leq 2^d$), transformation into spherical coordinates shows that
$$
\ell_d([H_1,\ldots,H_d,z(H_1,\ldots,H_d)+H_{1,u}])\geq c_d\,\ell_{d-1}(S_j)\,,
$$
where $c_d>0$ is a dimension dependent constant and $\ell_{d-1}(S_j)$ is the spherical Lebesgue measure of $S_j$. Consequently, we have
\begin{equation*}
\begin{split}
\alpha_t(y)&\leq \frac{M}{(d+1)!}\int_{\cH^d} \I\{H_1\cap\ldots\cap H_d\cap K\neq\emptyset\}\\
&\qquad\qquad\qquad\times\sum_{j=1}^{2^d}\int_{S_j}\left(\frac{y}{c_d\,\ell_{d-1}(S_j)}\right)^{1/d}\,\dint u \, \mu^d(\dint(H_1,\ldots,H_d))\\
&\leq \frac{M}{(d+1)!}\int_{\cH^d} \I\{H_1\cap\ldots\cap H_d\cap K\neq\emptyset\}\\
&\qquad\qquad\qquad\times\sum_{j=1}^{2^d}\ell_{d-1}(S_j)\left(\frac{y}{c_d\,\ell_{d-1}(S_j)}\right)^{1/d}\,\mu^d(\dint(H_1,\ldots,H_d))\,.
\end{split}
\end{equation*}
Since the last expression is finite, we can apply the dominated convergence theorem in \eqref{eq:alphaHyperplanes}. By the same arguments we used to obtain an upper bound for the inner integral in \eqref{eq:alphaHyperplanes}, we see that, for $H_1,\ldots, H_d\in\cH^d$ and $u\in\SS^{d-1}$,
\begin{equation*}
\begin{split}
& \lim\limits_{t\to\infty}\int_{-\infty}^\infty t^{d+1}\I\{[H_1,\ldots,H_d,H_{\delta,u}]\subset K\} \I\{\ell_d([H_1,\ldots,H_d,H_{\delta,u}])\leq yt^{-\gamma}\} |\delta|^{r-1}\,\dint\delta\\
& =  2 \I\{H_1\cap\ldots\cap H_d\cap K\neq\emptyset\}\, \ell_d([H_1,\ldots,H_d,z(H_1,\hdots,H_d)+H_{1,u}])^{-1/d}\\
& \qquad\qquad\qquad\times  |u^T z(H_1,\hdots,H_d)|^{r-1} y^{1/d}\,.
\end{split}
\end{equation*}
Altogether, we obtain that
$$
\lim\limits_{t\to\infty} \alpha_t(y)=\beta y^{1/d}\,.
$$
By the same estimates as above, we have that, for any $\ell\in\{1,\hdots,d\}$,
\begin{align*}
& t^{\ell} \int_{\cH^\ell} \bigg(t^{d+1-\ell}\int_{\cH^{d+1-\ell}} \I\{ [H_1,\hdots,H_{d+1}]\subset H, V_d([H_1,\hdots,H_{d+1}])\leq yt^{-\gamma} \}\\
& \hskip 3cm \mu^{d+1-\ell}(\dint(H_{\ell+1},\hdots,H_{d+1})) \bigg)^2\mu^\ell(\dint(H_1,\hdots,H_\ell))\\
& \leq t^{\ell} \int_{\cH^\ell} \bigg(M t^{-\ell}\int_{\cH^{d-\ell}} \int_{\SS^{d-1}} \I\{H_1\cap\hdots\cap H_d\cap K\neq \emptyset\} \, y^{1/d}\\
& \quad  \ell_d([H_1,\hdots,H_d,z(H_1,\hdots,H_d)+H_{1,u}])^{-1/d} \, \dint u \,\mu^{d-\ell}(\dint(H_{\ell+1},\hdots,H_{d})) \bigg)^2\\
& \hspace{7cm}\mu^\ell(\dint(H_1,\hdots,H_\ell))\,.
\end{align*}
Hence, $r_t(y)\to 0$ as $t\to\infty$ so that application of Corollary \ref{corol:Main} completes the proof of the Poisson case. The result for an underlying binomial point process follows from similar estimates and Corollary \ref{corol:MainBinomial}.\hfill $\Box$
\end{proof}

\begin{remark}
Although Corollary \ref{corol:Main} or Corollary \ref{corol:MainBinomial} deliver a rate of convergence, we cannot provide such rate for this particular example. This is due to the fact that the exact asymptotic behaviour of $\alpha_t(y)$ or $\alpha_n(y)$ depends in a delicate way on the smoothness behaviour of the boundary of $K$.
\end{remark}

Corollary \ref{Corol:Hyperplanes} admits a nice interpretation in the planar case $d=2$. Namely, the smallest triangle contained in $K$ coincides with the smallest triangular cell included in $K$ of the line tessellation induced by $\eta_t$ or $\zeta_n$ (note that this argument fails in higher dimensions). This way, Corollary \ref{Corol:Hyperplanes} also makes a statement about the area of small triangular cells, which generalizes Corollary 2.7 in \cite{STScalingLimits} from the translation-invariant case $r=1$ to arbitrary distance parameters $r\geq 1$:

\begin{corollary}
Denote by $A_t$ or $A_n$ the area of the smallest triangular cell in $K$ of a line tessellation generated by a Poisson line process $\eta_t$ or a binomial line process $\zeta_n$ with distance parameter $r\geq 1$, respectively. Then $t^{6}A_t$ and $n^{6}A_n$ both converge in distribution, as $t\to\infty$ or $n\to\infty$, to a Weibull random variable with survival function $y\mapsto\exp(-{\beta}\,y^{1/2})$, $y\geq 0$, where $\beta$ is as in Corollary \ref{Corol:Hyperplanes}.
\end{corollary}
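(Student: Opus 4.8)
The plan is to reduce this statement to Corollary \ref{Corol:Hyperplanes} in the special case $d=2$, $m=1$, by identifying the area $A_t$ (respectively $A_n$) of the smallest triangular cell with the first order statistic $M_t^{(1)}$ (respectively $\widehat{M}_n^{(1)}$) of the point process $\xi_t$ (respectively $\widehat{\xi}_n$) studied there. Once this identification is made the conclusion is immediate: for $d=2$ the scaling in Corollary \ref{Corol:Hyperplanes} is $t^{d(d+1)}=t^6$, the limiting intensity measure is $B\mapsto \frac{\beta}{2}\int_B u^{-1/2}\,\dint u$, and for $m=1$ the survival function $y\mapsto\exp(-\beta y^{1/d})\sum_{i=0}^{m-1}(\beta y^{1/d})^i/i!$ collapses to $y\mapsto\exp(-\beta y^{1/2})$, since the sum reduces to its single term $1$.

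The heart of the matter is therefore the geometric identity $A_t=M_t^{(1)}$. Recall that $M_t^{(1)}$ is the smallest area among \emph{all} triangles $[H_1,H_2,H_3]\subset K$ spanned by three distinct lines of $\eta_t$, whereas $A_t$ is the smallest area among those triangles that actually occur as cells of the induced line tessellation. Since every triangular cell is in particular such a triangle, one trivially has $A_t\geq M_t^{(1)}$. The reverse inequality rests on the claim that the area-minimizing triangle $T=[H_1,H_2,H_3]\subset K$ is itself a cell of the tessellation.

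To prove the claim I would argue by contradiction. If $T$ were not a cell, some further line $H_4$ of the process would meet the interior of $T$; by general position (which holds almost surely) $H_4$ crosses exactly two of the three edges of $T$, say those lying on $H_1$ and $H_2$. Then $H_1,H_2,H_4$ bound the triangle $[H_1,H_2,H_4]$, namely the part of $T$ cut off towards the vertex $H_1\cap H_2$, which is strictly contained in $T$ and hence has strictly smaller area; moreover it lies in $K$ by convexity of $K$ together with $T\subset K$. This contradicts the minimality of $T$. Consequently no line crosses the interior of $T$, so the cell containing the interior of $T$ is bounded precisely by $H_1,H_2,H_3$ and coincides with $T$; thus $T$ is a triangular cell and $A_t\leq M_t^{(1)}$, giving $A_t=M_t^{(1)}$. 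The identical reasoning applies to the binomial process and yields $A_n=\widehat{M}_n^{(1)}$.

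The main obstacle is not analytic but geometric: making the cutting argument fully rigorous, in particular confirming that a crossing line always produces a strictly smaller \emph{admissible} (i.e.\ $K$-contained) triangle, and discarding degenerate configurations such as parallel or concurrent lines and lines tangent to $\partial K$. All of these occur with probability zero under both $\eta_t$ and $\zeta_n$ and may therefore be ignored. I would also emphasize that this reduction is genuinely two-dimensional—as already flagged in the text preceding the corollary, the coincidence of smallest triangle and smallest triangular cell fails for $d\geq 3$—so no attempt should be made to phrase the argument in a dimension-free manner.
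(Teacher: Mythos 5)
Your proposal is correct and takes essentially the same route as the paper: the paper likewise reduces the statement to Corollary \ref{Corol:Hyperplanes} with $d=2$ and $m=1$ by asserting that in the plane the smallest triangle contained in $K$ coincides with the smallest triangular cell of the induced line tessellation. Your cutting argument merely supplies the detailed justification of this geometric identification, which the paper states without proof (and, as you note, correctly flags as failing in higher dimensions).
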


\subsection{Flat triangles}\label{subsec:FlatTriangles}

So-called \textit{ley lines} are expected alignments of a set of locations that are of geographical and/or historical interest, such as ancient monuments, megaliths and natural ridge-tops \cite{LeyHunting}. For this reason, there is some interest in archaeology, for example, to test a point pattern on spatial randomness against an alternative favouring collinearities. We carry out this program in case of a planar Poisson or binomial point process and follow \cite[Section 5]{SilvermanBrown}, where the asymptotic behaviour of the number of so-called flat triangles in a binomial point process has been investigated. 

Let $K$ be a convex body in the plane and let $\mu$ be a probability measure on $K$ which has a continuous density $\varphi$ with respect to the Lebesgue measure $\ell_2|_K$ restricted to $K$. By $\eta_t$ we denote a Poisson point process with intensity measure $\mu_t:=t\mu$, $t>0$, and by $\zeta_n$ a binomial process of $n\geq 1$ points which are independent and identically distributed according to $\mu$. For a triple $(x_1,x_2,x_3)$ of distinct points of $\eta_t$ or $\zeta_n$ we let $\theta(x_1,x_2,x_3)$ be the largest angle of the triangle formed by $x_1,x_2$ and $x_3$.
We can now build the point processes
$$\xi_t:={1\over 6}\sum_{(x_1,x_2,x_3)\in\eta_{t,\neq}^3}\delta_{\pi-\theta(x_1,x_2,x_3)}$$
and
$$\widehat{\xi}_n:={1\over 6}\sum_{(x_1,x_2,x_3)\in\zeta_{n,\neq}^3}\delta_{\pi-\theta(x_1,x_2,x_3)}$$
on the positive real half-line. The interpretation is as follows: if for a triple  $(x_1,x_2,x_3)$ in $\eta_{t,\neq}^3$ or $\zeta_{n,\neq}^3$ the value $\pi-\theta(x_1,x_2,x_3)$ is small, then the triangle formed by these points is flat in the sense that its height on the longest side is small.

\begin{corollary}\label{cor:flattriangles}
Define
\begin{equation*}
\beta:=\int_K\int_K\int_0^1 s(1-s)\,\varphi(sx_1+(1-s)x_2)\,\|x_1-x_2\|^2\,\dint s\,\mu(\dint x_1) \, \mu(\dint x_2).
\end{equation*}
Further assume that the density $\varphi$ is Lipschitz continuous. Then the re-scaled point processes $t^3\xi_t$ and $n^3\widehat\xi_n$ both converge in distribution to a homogeneous Poisson point process on $\RR_+$ with intensity $\beta$, as $t\to\infty$ or $n\to\infty$, respectively. In addition, there is a constant $C_y>0$ depending on $K$, $\varphi$ and $y$ such that
$$
\left|\PP(t^3M_t^{(m)}>y)-e^{-\beta y}\sum_{i=0}^{m-1}\frac{(\beta y)^i}{i!}\right|\leq C_y\,t^{-1}
$$
and
$$
\left|\PP(n^3M_n^{(m)}>y)-e^{-\beta y}\sum_{i=0}^{m-1}\frac{(\beta y)^i}{i!}\right|\leq C_y\,n^{-1}
$$
for all $t\geq 1$, $n\geq 3$ and $m\in\NN$.
\end{corollary}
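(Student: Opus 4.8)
The plan is to verify the hypotheses of Corollary~\ref{corol:Main} and of its binomial counterpart Corollary~\ref{corol:MainBinomial} with $k=3$, $\gamma=3$ and $\tau=1$, so that the limiting intensity measure becomes $\nu(B)=\beta\,\ell_1(B)$, namely that of a homogeneous Poisson process of intensity $\beta$. Everything then reduces to two tasks: determining the asymptotics of $\alpha_t(y)$ and showing $\alpha_t(y)\to\beta y$ with a quantitative error, and bounding $r_t(y)$. The binomial case will follow verbatim after replacing the Poisson factorial moments by descending factorials.

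For $\alpha_t(y)$ the key geometric fact is that $\pi-\theta(x_1,x_2,x_3)$ is small exactly when the triangle is flat, with the vertex carrying the largest angle lying close to the opposite (longest) side. I would first use the symmetry of the integrand and of $\mu^3$ to restrict to the event that the largest angle sits at $x_3$; since the apex is generically unique this contributes a factor $3$ and turns the prefactor $\tfrac1{3!}$ into $\tfrac12$. Fixing the base $x_1,x_2$, writing $b=\|x_1-x_2\|$ and parametrizing the apex by $x_3=s x_1+(1-s)x_2+h\,n$ with $n$ a unit normal to $x_2-x_1$, $s\in(0,1)$ the foot parameter and $h$ the signed height, the Jacobian of $\dint x_3$ equals $b\,\dint s\,\dint h$. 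Computing the two base angles gives the exact relation $\pi-\theta=\arctan\!\big(h/((1-s)b)\big)+\arctan\!\big(h/(sb)\big)$, whence $h=(\pi-\theta)\,s(1-s)\,b\,(1+O((\pi-\theta)^2))$, so that the threshold $\pi-\theta\le y t^{-3}$ selects a height strip of width $2\,y t^{-3}s(1-s)b$ up to higher-order corrections. Assembling the pieces,
\begin{align*}
\alpha_t(y)=\frac{t^3}{2}\int_K\!\int_K\Big(2\,y t^{-3}\,b^2\!\int_0^1 s(1-s)\,\varphi\big(s x_1+(1-s)x_2\big)\,\dint s\Big)\,\mu(\dint x_1)\,\mu(\dint x_2)+E_t=\beta y+E_t,
\end{align*}
which recovers exactly the stated constant $\beta$. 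The error $E_t$ collects (a) replacing $\varphi(x_3)$ by its value $\varphi(s x_1+(1-s)x_2)$ at the foot, (b) the quadratic correction in the height–angle relation, and (c) the event that the foot lies within distance $O(t^{-3})$ of $\partial K$, forcing $x_3\notin K$. By Lipschitz continuity of $\varphi$ the first term is $O(t^{-3})$, and the other two are of the same or smaller order, so $|\beta y-\alpha_t(y)|=O(t^{-3})\le C_y t^{-1}$. (For the pure convergence statement, continuity of $\varphi$ together with dominated convergence already suffices; Lipschitz continuity is only needed for the rate.)

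It remains to bound $r_t(y)=\max_{\ell\in\{1,2\}}(\dots)$. For $\ell=2$ one fixes the base pair $(x_1,x_2)$, in which case the inner integral is the one just computed and is $O(t\cdot t^{-3})=O(t^{-2})$; squaring and integrating against $t^2\mu^2$ gives $O(t^{-2})$. The decisive case is $\ell=1$: fixing one vertex $x_1$, the set of pairs $(x_2,x_3)$ completing a flat triangle has $\mu^2$-measure $O(t^{-3})$ (for each $x_2$ the third point lies in a strip of width $O(t^{-3})$ about the line through $x_1,x_2$), so the inner integral is $O(t^2\cdot t^{-3})=O(t^{-1})$; squaring and integrating against $t\mu$ yields $O(t^{-1})$. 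Hence $r_t(y)=O(t^{-1})$ is the dominant contribution, and Corollary~\ref{corol:Main} delivers the bound $C_y t^{-1}$. In the binomial case $(n)_3=n^3(1+O(n^{-1}))$ already produces an $O(n^{-1})$ discrepancy in $\alpha_n(y)$, matching the extra term $\alpha_n(y)/n$ in Corollary~\ref{corol:MainBinomial}, and one concludes in the same way.

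The main obstacle is the geometric bookkeeping behind the computation of $\alpha_t(y)$: isolating the apex cleanly via symmetry, justifying the local $(s,h)$-parametrization uniformly over base pairs, and showing that the boundary of $K$—where the apex can be pushed outside $K$ and where the foot-point approximation of $\varphi$ degrades—contributes only at a negligible order. Once the exact height–angle identity and the Lipschitz estimate are in hand, the remaining error terms are routine but must be tracked carefully to confirm they are all $O(t^{-1})$ or smaller, so that the rate is genuinely governed by $r_t(y)$.
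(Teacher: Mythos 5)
Your proposal is correct and follows essentially the same route as the paper's proof: the same symmetry reduction to the apex vertex (turning $\tfrac1{3!}$ into $\tfrac12$), the same $(s,h)$-parametrization with Jacobian $\|x_1-x_2\|$, the same first-order arctangent control of the height--angle relation (the paper uses the sandwich $x-x^2\le\arctan x\le x$ where you invert the exact identity, a cosmetic difference), the same three error sources for $\alpha_t(y)$ each of order $t^{-3}$, the same strip/cone bound making $r_t(y)=O(t^{-1})$ with $\ell=1$ dominant, and the same appeal to Corollaries \ref{corol:Main} and \ref{corol:MainBinomial}. The only point you leave as an acknowledged obstacle---that the boundary contribution is negligible---is handled in the paper by restricting to the set $A_{y,t}$ of pairs whose associated strips lie inside $K$ and noting its complement has measure $O(t^{-3})$, which closes that gap cleanly.
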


\begin{proof}
To apply Corollary \ref{corol:Main} we have to consider the limit behaviour of $\alpha_t(y)$ and $r_t(y)$ for fixed $y>0$, as $t\to\infty$. For $x_1,x_2\in K$ and $\varepsilon>0$ define $A(x_1,x_2,\varepsilon)$ as the set of all $x_3\in K$ such that $\pi-\theta(x_1,x_2,x_3)\leq \varepsilon$. Then we have
$$
\alpha_t(y) = \frac{t^3}{6}\int_K\int_K\int_K\I\{x_3\in A(x_1,x_2,yt^{-\gamma})\}\,\varphi(x_1)\varphi(x_2)\varphi(x_3)\,\dint x_3 \, \dint x_2 \, \dint x_1\,.
$$
Without loss of generality we can assume that $x_3$ is the vertex adjacent to the largest angle. We indicate this by writing $x_3={\rm LA}(x_1,x_2,x_3)$. We parametrize $x_3$ by its distance $h$ to the line through $x_1$ and $x_2$ and the projection of $x_3$ onto that line, which can be represented as $sx_1+(1-s)x_2$ for some $s\in[0,1]$. Writing $x_3=x_3(s,h)$, we obtain that
\begin{align*}
\alpha_t(y) = \frac{t^3}{2}\int_K\int_K\int_0^1\int_{-\infty}^\infty & \I\{x_3(s,h)\in A(x_1,x_2,yt^{-\gamma}),x_3={\rm LA}(x_1,x_2,x_3)\}\, \\
&\times\varphi(x_1) \varphi(x_2)\varphi(x_3(s,h)) \|x_1-x_2\| \,\dint h \, \dint s \, \dint x_2 \, \dint x_1\,.
\end{align*}
The sum of the angles at $x_1$ and $x_2$ is given by
$$
\arctan(|h|/(s\|x_1-x_2\|))+\arctan(|h|/((1-s)\|x_1-x_2\|))\,.
$$
Using, for $x\geq 0$, the elementary inequality $x-x^2\leq \arctan x \leq x$, we deduce that
\begin{align*}
& \frac{|h|}{s(1-s)\|x_1-x_2\|}-\frac{h^2}{s^2(1-s)^2\|x_1-x_2\|^2}\\
& \quad\leq \arctan(|h|/(s\|x_1-x_2\|))+\arctan(|h|/((1-s)\|x_1-x_2\|)) \leq \frac{|h|}{s(1-s)\|x_1-x_2\|}\,.
\end{align*}
Consequently, $\pi-\theta(x_1,x_2,x_3(s,h))\leq yt^{-\gamma}$ is satisfied if
$$
|h| \leq s(1-s)\|x_1-x_2\| y t^{-\gamma}
$$
and cannot hold if
$$
|h| \geq s(1-s)\|x_1-x_2\| (y t^{-\gamma}+2 y^2 t^{-2\gamma})
$$
and $t$ is sufficiently large. Let $A_{y,t}$ be the set of all $x_1,x_2\in K$ such that
$$
B_{\tan(t^{-\gamma}y/2) \|x_1-x_2\|}^d(x_1),\,B_{\tan(t^{-\gamma}y/2) \|x_1-x_2\|}^d(x_2)\subset K\,.
$$
Now the previous considerations yield that, for $t$ sufficiently large and $(x_1,x_2)\in A_{y,t}$,
\begin{align*}
& \frac{t^3}{2}\int_0^1\int_{-\infty}^\infty  \I\{x_3(s,h)\in A(x_1,x_2,yt^{-\gamma}),x_3(s,h)={\rm LA}(x_1,x_2,x_3)\}\\
&\hskip 3cm \times \|x_1-x_2\|\, \varphi(x_1) \varphi(x_2)\varphi(x_3(s,h))\,\dint h \, \dint s\\
& = t^3 \int_0^1 \big(s(1-s)\|x_1-x_2\| y t^{-\gamma}+R(x_1,x_2,s)\big) \|x_1-x_2\| \\
& \hskip 3cm  \times\varphi(x_1) \varphi(x_2)\varphi(sx_1+(1-s)x_2) \, \dint s\\
& \quad +  \frac{t^3}{2}\int_0^1\int_{-\infty}^\infty  \I\{x_3(s,h)\in A(x_1,x_2,yt^{-\gamma}),x_3(s,h)={\rm LA}(x_1,x_2,x_3)\} \|x_1-x_2\|\, \\
& \hskip 3cm \times\varphi(x_1) \varphi(x_2)\big(\varphi(x_3(s,h))-\varphi(sx_1+(1-s)x_2)\big) \,\dint h \, \dint s
\end{align*}
with $R(x_1,x_2,s)$ satisfying the estimate $|R(x_1,x_2,s)|\leq 2 s(1-s)\|x_1-x_2\| y^2 t^{-2\gamma}$. For $(x_1,x_2)\not\in A_{y,t}$ the right hand-side is an upper bound. The choice $\gamma=3$ leads to
\begin{align*}
& |\alpha_t(y)-\beta y|\\
& \leq \int_{K^2\setminus A_{y,t}} \int_0^1 s(1-s) \|x_1-x_2\|^2 y \, \varphi(x_1) \varphi(x_2)\varphi(sx_1+(1-s)x_2) \, \dint s \, \dint(x_1,x_2)\\
& \quad +2 t^{-3} \int_{K^2} \int_0^1 s (1-s) y^2 \|x_1-x_2\|^2 \, \varphi(x_1) \varphi(x_2)\varphi(sx_1+(1-s)x_2) \, \dint s \, \dint(x_1,x_2)\\
& \quad + \frac{t^3}{2} \int_{K^2}\int_0^1\int_{-\infty}^\infty  \I\{x_3(s,h)\in A(x_1,x_2,yt^{-\gamma})\} \|x_1-x_2\|\, \varphi(x_1) \varphi(x_2)\\
& \hskip 3cm \times\big|\varphi(x_3(s,h))-\varphi(sx_1+(1-s)x_2)\big| \,\dint h \, \dint s \, \dint(x_1,x_2)\,.
\end{align*}
Note that $\ell_2^2(K\setminus A_{y,t})$ is of order $t^{-3}$ so that the first integral on the right-hand side is of the same order. By the Lipschitz continuity of the density $\varphi$ there is a constant $C_\varphi>0$ such that
$$
|\varphi(x_3(s,h))-\varphi(sx_1+(1-s)x_2)| \leq C_\varphi h\,.
$$
This implies that the third integral is of order $t^{-3}$. Combined with the fact that also the second integral above is of order $t^{-3}$, we see that there is a constant $C_{y,1}>0$ such that
$$
|\alpha_t(y)-\beta y| \leq C_{y,1} t^{-3}
$$
for $t\geq 1$.

For given $x_1,x_2\in K$, we have that
$$
\int_K \I\{x_3\in A(x_1,x_2,y t^{-\gamma})\} \, \varphi(x_3) \, \dint x_3 \leq M \int_K \I\{x_3\in A(x_1,x_2,yt^{-\gamma})\} \, \dint x_3
$$
with $M=\sup_{z\in K}\varphi(z)$. By the same arguments as above, we see that the integral over all $x_3$ such that the largest angle is adjacent to $x_3$ is bounded by
\begin{align*}
& M \int_0^1 s(1-s)\|x_1-x_2\|yt^{-3}+2s(1-s)\|x_1-x_2\|y^2t^{-6} \dint s\\
& \leq 2 M {\operatorname{diam}}(K) (yt^{-3}+2y^2t^{-6})\,,
\end{align*}
where ${\operatorname{diam}}(K)$ stands for the diameter of $K$.
The maximal angle is at $x_1$ or $x_2$ if $x_3$ is contained in the union of two cones with opening angle $2t^{-3}y$ and apices at $x_1$ and $x_2$, respectively. The integral over these $x_3$ is bounded by
$2 M {\operatorname{diam}}(K)^2t^{-3}y$. Altogether, we obtain that
\begin{align*}
& \int_K \I\{x_3\in A(x_1,x_2,y t^{-\gamma})\} \varphi(x_3) \, \dint x_3\\
& \leq 2 M  {\operatorname{diam}}(K) (yt^{-3}+2y^2t^{-6})+ 2 M {\operatorname{diam}}(K)^2y t^{-3}.
\end{align*}
This estimate implies that, for any $\ell\in\{1,2\}$,
\begin{align*}
& t^\ell \int_{K^\ell} \!\!\bigg( t^{3-\ell}\int_{K^{3-\ell}}\!\!\! \I\{x_3\in A(x_1,x_2,y t^{-3})\} \, \mu^{3-\ell}(\dint(K_{\ell+1},\hdots,K_3)) \bigg)^2 \!\!\! \mu^\ell(\dint(K_1,\hdots,K_\ell))\\
& \leq t^{6-\ell} (M\ell_2(K))^{4-\ell} \big( 2 M {\operatorname{diam}}(K) (yt^{-3}+2y^2t^{-6})+ 2 M {\operatorname{diam}}(K)^2 y t^{-3} \big)^2\,.
\end{align*}
Since the upper bound behaves like $t^{-\ell}$ for $t\geq 1$, there is a constant $C_{y,2}>0$ such that
$$
r_t(y)\leq C_{y,2} t^{-1}
$$
for $t\geq 1$. Now an application of Corollary \ref{corol:Main} concludes the proof in case of an underlying Poisson point process. The binomial case can be handled similarly using Corollary \ref{corol:MainBinomial}.
\end{proof}

\begin{remark}
We have assumed that the density $\varphi$ is Lipschitz continuous. If this is not the case, one can still show that the re-scaled point processes $t^3\xi_t$ and $n^3\widehat{\xi}_n$ converge in distribution to a homogeneous Poisson point process on $\RR_+$ with intensity $\beta$. However, we are then no more able to provide a rate of convergence for the associated order statistics $M_t^{(m)}$.
\end{remark}

\begin{remark}
In \cite[Section 5]{SilvermanBrown} the asymptotic behaviour of the number of flat triangles in a binomial point process has been investigated, while our focus here was on the angle statistic of such triangles. However, these two random variables are asymptotically equivalent so that Corollary \ref{cor:flattriangles} also delivers an alternative approach to the results in \cite{SilvermanBrown}. In addition, it allows to deal with an underlying Poisson point process, where it provides rates of convergence in the case of a Lipschitz density.
\end{remark}

\subsection{Non-intersecting $k$-flats}\label{subsec:Flats}

Fix a space dimension $d\geq 3$ and let $k\geq 1$ be such that $2k<d$. By $G(d,k)$ let us denote the space of $k$-dimensional linear subspaces of $\RR^d$, which is equipped with a probability measure $\varsigma$. In what follows we shall assume that $\varsigma$ is absolutely continuous with respect to the Haar probability measure on $G(d,k)$. The space of $k$-dimensional affine subspaces of $\RR^d$ is denoted by $A(d,k)$ and for $t>0$ a translation-invariant measure $\mu_t$ on $A(d,k)$ is defined by the relation
\begin{equation}\label{eq:DefMuFlats}
\int_{A(d,k)}g(E)\,\mu_t(\dint E)=t\int_{G(d,k)}\int_{L^\perp}g(L+x)\,\ell_{d-k}(\dint x)\,\varsigma(\dint L)\,,
\end{equation}
where $g\geq 0$ is a measurable function on $A(d,k)$. We will use $E$ and $F$ to indicate elements of $A(d,k)$, while $L$ and $M$ will stand for linear subspaces in $G(d,k)$. We also put $\mu=\mu_1$. For two fixed $k$-flats $E,F\in A(d,k)$ we denote by $d(E,F)=\inf\{\|x_1-x_2\|:x_1\in E,\,x_2\in F\}$ the distance of $E$ and $F$. For almost all $E$ and $F$ it is realized by two uniquely determined points $x_E\in E$ and $x_F\in F$, i.e. $d(E,F)=\|x_E-x_F\|$, and we let $m(E,F):=(x_E+x_F)/2$ be the midpoint of the line segment joining $x_E$ with $x_F$.

Let $K\subset\RR^d$ be a convex body and let $\eta_t$ be a Poisson point process on $A(d,k)$ with intensity measure $\mu_t$ as defined in \eqref{eq:DefMuFlats}. We will speak about $\eta_t$ as a Poisson $k$-flat process and denote, more generally, the elements of $A(d,k)$ or $G(d,k)$ as $k$-flats. We will not treat the binomial case in what follows since the measures $\mu_t$ are not finite. We notice that in view of \cite[Theorem 4.4.5 (c)]{SW} any two $k$-flats of $\eta_t$ are almost surely in general position, a fact which from now on will be used without further comment.

Point processes of $k$-dimensional flats in $\RR^d$ have a long tradition in stochastic geometry and we refer to \cite{SKM} or \cite{SW} for general background material. Moreover, we mention the works \cite{HugLastWeil} and \cite{SchneiderDuality}, which deal with distance measurements and the so-called proximity of Poisson $k$-flat processes and are close to what we consider here. While in these papers only mean values are considered, we are interested in the point process $\xi_t$ on $\RR_+$ defined by
$$\xi_t:={1\over 2}\sum_{(E,F)\in\eta_{t,\neq}^2}\delta_{d(E,F)^a}\,\I\{m(E,F)\in K\}$$
for a fixed parameter $a>0$. A particular case arises when $a=1$. Then $M_t^{(1)}$, for example, is the smallest distance between two $k$-flats from $\eta_t$ that have their midpoint in $K$.

\begin{corollary}\label{corol:Flats}
Define $$\beta=\frac{\ell_d(K)}{2}\,\kappa_{d-2k}\,\int_{G(d,k)}\int_{G(d,k)}[L,M]\,\varsigma(\dint L)\varsigma(\dint M)\,,$$ where $[L,M]$ is the $2k$-dimensional volume of a parallelepiped spanned by two orthonormal bases in $L$ and $M$. Then, as $t\to\infty$, $t^{2a/(d-2k)}\xi_t$ converges in distribution to a Poisson point process on $\RR_+$ with intensity measure $$B\mapsto(d-2k)\frac{\beta}{a}\int_Bu^{(d-2k-a)/a}\,\dint u\,,\qquad B\subset\RR_+\quad{\rm Borel}\,.$$ Moreover, there is a constant $C>0$ depending on $K$, $\varsigma$ and $a$ such that
\begin{equation*}
\begin{split}
\Bigg|\PP(t^{2a/(d-2k)}M_t^{(m)}>y) -& \exp\left(-\beta y^{(d-2k)/a}\right)\sum_{i=1}^{m-1}\frac{\left(\beta y^{(d-2k)/a}\right)^i}{i!}\Bigg|\\
&\leq C\, (y^{2(d-2k)/a}+y^{d-k+2(d-2k)/a})\,t^{-1}
\end{split}
\end{equation*}
for any $t\geq 1$, $y\geq 0$ and $m\in\NN$.
\end{corollary}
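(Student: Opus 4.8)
The plan is to apply Corollary \ref{corol:Main} with tuple size $k=2$, scaling exponent $\gamma=2a/(d-2k)$ and power $\tau=(d-2k)/a$, treating the pair functional $f(E,F)=d(E,F)^a$ restricted to those pairs with $m(E,F)\in K$, so that $\I\{|f|\le t^{-\gamma}y\}$ becomes $\I\{d(E,F)\le\rho\}\,\I\{m(E,F)\in K\}$ with $\rho:=(t^{-\gamma}y)^{1/a}=t^{-2/(d-2k)}y^{1/a}$. Everything then reduces to computing $\alpha_t(y)$ and estimating $r_t(y)$, and in the latter only the term $\ell=1$ occurs.

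First I would compute $\alpha_t(y)$ \emph{exactly}. Disintegrating $\mu$ as in \eqref{eq:DefMuFlats} into a direction $L\in G(d,k)$ governed by $\varsigma$ and a translation in $L^\perp$ governed by $\ell_{d-k}$, I fix two directions $L,M$, which are almost surely in general position because $2k<d$; then $L\oplus M$ is $2k$-dimensional and $(L+M)^\perp$ has dimension $d-2k$. For such a pair the unique closest points $x_E\in E$, $x_F\in F$ satisfy $x_F-x_E=v\in(L+M)^\perp$ with $\|v\|=d(E,F)$, and the midpoint is $m(E,F)=x_E+v/2$. The linear map carrying the two translations of $E$ and $F$ to the pair $(m,v)\in\RR^d\times(L+M)^\perp$ is a bijection whose Jacobian is the constant $[L,M]$; this is the classical transformation behind the proximity of Poisson flats (cf.\ \cite{SchneiderDuality,HugLastWeil}). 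Carrying it out factorizes the defining integral: the $m$-integral over $K$ contributes $\ell_d(K)$, the $v$-integral over the ball $\{\|v\|\le\rho\}$ in $(L+M)^\perp$ contributes $\kappa_{d-2k}\rho^{\,d-2k}$, and the direction integral contributes $\int\int[L,M]\,\varsigma\varsigma$. Because $\mu_t$ is translation invariant there is no boundary defect, so, using $t^2\rho^{\,d-2k}=y^{(d-2k)/a}$,
\begin{equation*}
\alpha_t(y)=\frac{t^2}{2}\,\ell_d(K)\,\kappa_{d-2k}\,\rho^{\,d-2k}\int_{G(d,k)}\int_{G(d,k)}[L,M]\,\varsigma(\dint L)\,\varsigma(\dint M)=\beta\,y^{(d-2k)/a}.
\end{equation*}
In particular $\alpha_t(y)=\beta y^\tau$ holds exactly, so the term $|\beta y^\tau-\alpha_t(y)|$ in Corollary \ref{corol:Main} vanishes and the whole error is $C\,r_t(y)$.

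Next I would estimate $r_t(y)=t^3\int_{A(d,k)}I(E)^2\,\mu(\dint E)$, where $I(E)=\int_{A(d,k)}\I\{d(E,F)\le\rho,\,m(E,F)\in K\}\,\mu(\dint F)$. Fixing $E$ and applying the same change of variables to the $F$-integral, $I(E)$ becomes an integral over $M\in G(d,k)$ and over $v\in(L+M)^\perp$ with $\|v\|\le\rho$ of the section content $\ell_k\big((E+v/2)\cap K\big)$, weighted by $[L,M]\le1$. Bounding the section by the $K$-dependent constant $\sup\{\ell_k(G\cap K):G\ \text{a $k$-flat}\}\le\kappa_k(\operatorname{diam}K/2)^k$ gives $I(E)\le c_K\,\rho^{\,d-2k}$, while $I(E)=0$ unless $E$ meets the $\rho/2$-neighbourhood of $K$, a set of $\mu$-measure bounded uniformly for $\rho\le1$. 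Squaring, integrating and multiplying by $t^3$ produces the leading contribution of order $t^3\rho^{\,2(d-2k)}=t^{-1}y^{2(d-2k)/a}$. The additional term $t^{-1}y^{\,d-k+2(d-2k)/a}$, which is relevant only in the large-$y$ regime where $\rho$ is comparable to the size of $K$, comes from a Steiner-type estimate of the $\mu$-measure of flats meeting $K$ in a thin boundary layer, in the spirit of the correction used in the proof of Corollary \ref{corol:Voronoi}. Either way $r_t(y)=O(t^{-1})$ with exactly the stated polynomial dependence on $y$, and $r_t(y)\to0$ for each fixed $y$.

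Finally, inserting $\alpha_t(y)=\beta y^\tau$ and the bound on $r_t(y)$ into Corollary \ref{corol:Main} gives the claimed estimate for $\PP(t^{2a/(d-2k)}M_t^{(m)}>y)$ at rate $t^{-1}$, while condition \eqref{eqn:MainAssumptionIII} holds (since $\alpha_t(y)=\beta y^\tau$ and $r_t(y)\to0$) and yields convergence of $t^{2a/(d-2k)}\xi_t$ to the Weibull process whose intensity measure, by \eqref{eqn:nu}, is $B\mapsto\beta\tau\int_B u^{\tau-1}\dint u=(d-2k)\tfrac{\beta}{a}\int_B u^{(d-2k-a)/a}\dint u$; no binomial counterpart is needed because $\mu_t$ is not finite. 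I expect the main obstacle to be the integral-geometric change of variables — verifying that the Jacobian is the constant $[L,M]$ and that the midpoint maps onto $K$ with no boundary defect — together with the Steiner-type boundary estimate required to capture the second term in the bound on $r_t(y)$.
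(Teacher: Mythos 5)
Your proposal follows the paper's proof essentially step for step: the same application of Corollary \ref{corol:Main} with $\gamma=2a/(d-2k)$ and $\tau=(d-2k)/a$, the same exact evaluation of $\alpha_t(y)=\beta y^{(d-2k)/a}$ via the integral-geometric substitution with Jacobian $[L,M]$ (the paper carries this out along the lines of Theorem 4.4.10 in \cite{SW}, splitting each flat into direction and translation and substituting $u=x_1-x_2$, $v=(x_1+x_2)/2$), and the same strategy for $r_t(y)$: bound the inner integral by $\kappa_{d-2k}\rho^{d-2k}$ times the maximal $k$-dimensional section content $\kappa_k({\rm diam}(K)/2)^k$, and bound the outer integral by the $\mu$-measure of flats that can contribute.

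The one place where your write-up is not actually a proof is the second term of the $r_t(y)$ bound, which you assert (``exactly the stated polynomial dependence on $y$'') rather than derive, and your heuristic for it is off target. In the paper this term is not a boundary-layer correction in the sense of Corollary \ref{corol:Voronoi} (there are no edge effects here: $\alpha_t(y)$ is exact); it comes from the hitting-measure factor $\kappa_{d-k}({\rm diam}(K)+2t^{-\gamma}y)^{d-k}$ for the flats $E$ in the outer integral, whose expansion for $t\geq 1$ produces precisely $y^{d-k+2(d-2k)/a}$. If you instead localize $E$ to the $\rho/2$-neighbourhood of $K$ with $\rho=(t^{-\gamma}y)^{1/a}$, as your sketch does, the extra factor is $({\rm diam}(K)+\rho)^{d-k}$ and the resulting exponent is $\bigl(2(d-2k)+d-k\bigr)/a$, which agrees with the stated $d-k+2(d-2k)/a$ only when $a=1$; for $a<1$ your bound is genuinely weaker for large $y$. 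The discrepancy is harmless in substance, because in the regime $t^{-\gamma}y>1$ where it matters one has $y^{2(d-2k)/a}t^{-1}\geq t^3$, so the claimed inequality is trivial (its right-hand side exceeds $1$ once $C\geq1$); but either this observation, or the paper's way of bounding the hitting measure, has to be supplied to obtain the inequality exactly as stated for all $a>0$, $y\geq 0$ and $t\geq 1$.
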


\begin{proof}
For $y>0$ and $t>0$ we have that
\begin{eqnarray*}
\alpha_t(y) &=& \frac{t^2}{2}\int_{A(d,k)}\int_{A(d,k)}\I\{d(E,F)\leq y^{1/a}t^{-\gamma/a},\,m(E,F)\in K\}\,\mu(\dint E)\mu(\dint F)\,.
\end{eqnarray*}
We abbreviate $\delta:=y^{1/a}t^{-\gamma/a}$ and evaluate the integral $$\cI:=\int_{A(d,k)}\int_{A(d,k)}\I\{d(E,F)\leq \delta,\,m(E,F)\in K\}\,\mu(\dint E)\mu(\dint F)\,.$$ For this, we define $V:=E+F$ and $U:=V^\bot$ and write $E$ and $F$ as $E=L+x_1$ and $F=M+x_2$ with $L,M\in G(d,k)$ and $x_1\in L^\bot$, $x_2\in M^\bot$. Applying now the definition \eqref{eq:DefMuFlats} of the measure $\mu$ and arguing along the lines of the proof of Theorem 4.4.10 in \cite{SW}, we arrive at the expression
\begin{equation*}
\begin{split}
\cI=\int_{G(d,k)}\int_{G(d,k)}\int_U\int_U &[L,M]\,\ell_{2k}\left(K\cap\left(V+\left(\frac{x_1+x_2}{2}\right)\right)\right) \\
&\times\I\{\|x_1-x_2\|\leq\d\} \,\ell_{d-2k}(\dint x_1)\ell_{d-2k}(\dint x_2) \varsigma(\dint L)\varsigma(\dint M)\,.
\end{split}
\end{equation*}
Substituting $u=x_1-x_2$, $v=(x_1+x_2)/2$ (a transformation having Jacobian equal to $1$), we find that
\begin{equation}\label{eq:ZwischenschrittI}
\begin{split}
\cI  = \int_{G(d,k)}\int_{G(d,k)}\int_U\int_U [L,M]\,&\ell_{2k}\big(K\cap(V+v)\big)\,\I\{\|u\|\leq\delta\}\\
&\ell_{d-2k}(\dint u)\ell_{d-2k}(\dint v)\varsigma(\dint L)\varsigma(\dint M)\,.
\end{split}
\end{equation}
Since $U$ has dimension $d-2k$, transformation into spherical coordinates in $U$ gives
\begin{eqnarray*}
\int_U{\bf 1}(\|u\|\leq\d)\,\dint u &=& (d-2k)\kappa_{d-2k}\int_0^\delta r^{d-2k-1}\,\dint r=\kappa_{d-2k}\d^{d-2k}\,.
\end{eqnarray*}
Moreover, $$\int_U\ell_{2k}\big(K\cap(V+v)\big)\,\ell_{d-2k}(\dint v)=\ell_d(K)$$ since $V=U^\perp$.  Combining these facts with \eqref{eq:ZwischenschrittI} we find that $$\cI=\delta^{d-2k}\,\ell_d(K)\,\kappa_{d-2k}\,\int_{G(d,k)}\int_{G(d,k)}[L,M]\,\varsigma(\dint L)\varsigma(\dint M)\,$$ and that
\begin{equation*}
\begin{split}
\alpha_t(y) =\frac{1}{2}\,\ell_d(K)\,\kappa_{d-2k}\,y^{(d-2k)/a}\,t^{2-\gamma(d-2k)/a}\int_{G(d,k)}\int_{G(d,k)}[L,M]\,\varsigma(\dint L)\varsigma(\dint M)\,.
\end{split}
\end{equation*}
 Consequently, choosing $\gamma=2a/(d-2k)$ we have that $$\alpha_t(y)=\beta y^{(d-2k)/a}\,.$$ For the remainder term $r_t(y)$ we write
$$
r_t(y)=t\int_{A(d,k)} \bigg(t\int_{A(d,k)}\I\{d(E,F)^a\leq yt^{-\gamma},\,m(E,F)\in K\}\,\mu(\dint F) \bigg)^2 \mu(\dint E)\,.
$$
This can be estimated along the lines of the proof of Theorem 3 in \cite{STFlats}. Namely, using that $[\,\cdot\,,\,\cdot\,]\leq 1$ and writing ${\rm diam}(K)$ for the diameter of $K$, we find that
\begin{eqnarray*}
r_t(y) &\leq& t\kappa_{d-k} ({\rm diam}(K)+2t^{-\gamma}y)^{d-k} \int_{G(d,k)} \bigg( t\int_{G(d,k)}\int_{(L+M)^\bot}\I\{\|x\|^a\leq yt^{-\gamma}\}\\
&&\qquad\qquad\times\kappa_k({\rm diam}(K)/2)^k\,\ell_{d-2k}(\dint x)\varsigma(\dint M) \bigg)^2  \, \varsigma(\dint L)\\
&\leq &  t\kappa_{d-k} ({\rm diam}(K)+2t^{-\gamma}y)^{d-k} \big(t \kappa_{d-2k} (yt^{-\gamma})^{(d-2k)/a} \kappa_k ({\rm diam}(K)/2)^k\big)^2\\
&=& \kappa_{d-k} ({\rm diam}(K)+2t^{-2a/(d-2k)}y)^{d-k}  \kappa_{d-2k}^2  \kappa_k^2 ({\rm diam}(K)/2)^{2k} \,y^{2(d-2k)/a}\,t^{-1}\,,
\end{eqnarray*}
where we have used that $\gamma=2a/(d-2k)$. This puts us in the position to apply Corollary \ref{corol:Main}, which completes the proof.\hfill $\Box$
\end{proof}

\begin{remark}
A particularly interesting case arises when the distribution $\varsigma$ coincides with the Haar probability measure on $G(d,k)$. Then the double integral in the definition of $\beta$ in Corollary \ref{corol:Flats} can be evaluated explicitly, namely we have
\begin{equation*}
\int_{G(d,k)}\int_{G(d,k)}[L,M]\,\varsigma(\dint L)\varsigma(\dint M)=\frac{{d-k\choose k}\k_{d-k}^2}{{d\choose k}\k_d\k_{d-2k}}
\end{equation*}
according to \cite[Lemma 4.4]{HugSchneiderSchuster2008}.
\end{remark}

\begin{remark}
Corollary \ref{corol:Flats} generalizes Theorem 4 in \cite{STFlats} (where the case $a=1$ has been investigated) to general length-powers $a>0$. However, it should be noticed that the set-up in \cite{STFlats} slightly differs from the one here. In \cite{STFlats} the intensity parameter $t$ was kept fixed, whereas the set $K$ was increased by dilations. But because of the scaling properties of a Poisson $k$-flat process and the $a$-homogeneity of $d(E,F)^a$, one can translate one result into the other. Moreover, we refer to \cite{HTW} for closely related results including directional constraints.
\end{remark}

\begin{remark}
In \cite{STScalingLimits} a similar problem has been addressed in the case where $\varsigma$ coincides with the Haar probability measure on $G(d,k)$. For a pair $(E,F)\in\eta_{t,\neq}^2$ satisfying $E\cap K\neq\emptyset$ and $F\cap K\neq\emptyset$, the distance between $E$ and $F$ was measured by $$d_K(E,F)=\inf\{\|x_1-x_2\|:x_1\in E\cap K,\,x_2\in F\cap K\},$$ and it has been shown in Theorem 2.1 ibidem that the associated point process 
$$\xi_t:={1\over 2}\sum_{(E,F)\in\eta_{t,\neq}^2}\delta_{d_K(E,F)}\,\I\{E\cap K\neq\emptyset,\,F\cap K\neq\emptyset\}$$
converges, after rescaling with $t^{2/(d-2k)}$, towards the same Poisson point process as in Corollary \ref{corol:Flats} when $\varsigma$ is the Haar probability measure on $G(d,k)$ and $a=1$.
\end{remark}

\section{Proofs of the main results}\label{sec:Proofs}


\subsection{Moment formulas for Poisson U-statistics}\label{subsec:Moments}

We call a Poisson functional $S$ of the form
$$
S=\sum_{(x_1,\hdots,x_k)\in\eta_{t,\neq}^k} f(x_1,\hdots,x_k)
$$
with $k\in\NN_0:=\NN\cup\{0\}$ and $f: \XX^k\to\RR$ a U-statistic of order $k$ of $\eta_t$, or a Poisson U-statistic for short (see \cite{LachiezeReyReitznerChapter}). For $k=0$ we use the convention that $f$ is a constant and $S=f$. In the following, we always assume that $f$ is integrable. Moreover, without loss of generality we assume that $f$ is symmetric since we sum over all permutations of a fixed $k$-tuple of points in the definition of $S$.

In order to compute mixed moments of Poisson U-statistics, we use the following notation. For $\ell\in\NN$ and $n_1,\hdots,n_\ell\in\NN_0$ we define $N_0=0$, $N_i=\sum_{j=1}^{i}n_j,  \ i\in\{1,\hdots,\ell\}$, and
$$
J_i=\begin{cases} \{N_{i-1}+1,\hdots, N_i\}, & N_{i-1}<N_i\\ \emptyset, & N_{i-1}=N_i \end{cases}, \quad i\in\{1,\hdots,\ell\}.
$$
Let $\Pi(n_1,\hdots,n_\ell)$ be the set of all partitions $\sigma$ of $\{1,\hdots,N_\ell\}$ such that for any $i\in\{1,\hdots,\ell\}$ all elements of $J_i$ are in different blocks of $\sigma$. By $|\sigma|$ we denote the number of blocks of $\sigma$. We say that two blocks $B_1$ and $B_2$ of a partition $\sigma\in\Pi(n_1,\hdots,n_\ell)$ intersect if there is an $i\in\{1,\hdots,\ell\}$ such that $B_1\cap J_i\neq \emptyset$ and $B_2\cap J_i \neq \emptyset$. A partition $\sigma\in\Pi(n_1,\hdots,n_\ell)$ with blocks $B_1,\hdots,B_{|\sigma|}$ belongs to $\widetilde{\Pi}(n_1,\hdots,n_\ell)$ if there are no non-empty sets $M_1,M_2\subset\{1,\hdots,|\sigma|\}$ with $M_1\cap M_2=\emptyset$ and $M_1\cup M_2=\{1,\hdots,|\sigma|\}$ such that for any $i\in M_1$ and $j\in M_2$ the blocks $B_i$ and $B_j$ do not intersect. Moreover, we define
$$
\Pi_{\neq}(n_1,\hdots,n_\ell)=\{\sigma\in\Pi(n_1,\hdots,n_\ell): |\sigma|>\min\{n_1,\hdots,n_\ell\}\}.
$$
If there are $i,j\in\{1,\hdots,\ell\}$ with $n_i\neq n_j$, we have $\Pi_{\neq}(n_1,\hdots,n_\ell)=\Pi(n_1,\hdots,n_\ell)$.

For $\sigma\in\Pi(n_1,\hdots,n_\ell)$ and $f: \XX^{N_\ell} \to \RR$ we define $f_\sigma: \XX^{|\sigma|}\to \RR$ as the function which arises by replacing in the arguments of $f$ all variables belonging to the same block of $\sigma$ by a new common variable. Since we are only interested in the integral of this new function in the sequel, the order of the new variables does not matter. For $f^{(i)}: \XX^{n_i}\to\RR$, $i\in\{1,\hdots,\ell\}$, let $\otimes_{i=1}^\ell f^{(i)}: \XX^{N_\ell}\to\RR$ be given by
$$
\big(\otimes_{i=1}^\ell f^{(i)}\big)(x_1,\hdots,x_{N_\ell}) = \prod_{i=1}^\ell f^{(i)}(x_{N_{i-1}+1},\hdots,x_{N_i})\,.
$$
The following lemma allows us to compute moments of Poisson U-statistics (see also \cite{PrivaultChapter}). Here and in what follows we mean by a Poisson functional $F=F(\eta_t)$ a random variable only depending on the Poisson point process $\eta_t$ for some fixed $t>0$.

\begin{lemma}\label{lem:momentsUstatistics}
For $\ell\in\NN$ and $f^{(i)}\in L^1_s(\mu_t^{k_i})$ with $k_i\in\NN_0$, $i=1,\hdots,\ell$, such that
$$
\int_{\XX^{|\sigma|}} |\big(\otimes_{i=1}^\ell f^{(i)}\big)_\sigma| \, \dint\mu_t^{|\sigma|}<\infty\qquad{\rm for\ all}\qquad \sigma\in\Pi(k_1,\hdots,k_\ell)\,,
$$
let
$$
S_i=\sum_{(x_1,\hdots,x_{k_i})\in\eta^{k_i}_{t,\neq}} f^{(i)}(x_1,\hdots,x_{k_i}), \quad i=1,\hdots,\ell\,,
$$
and let $F$ be a bounded Poisson functional. Then
\begin{equation*}
\begin{split}
 \EE\Big[ F \prod_{i=1}^\ell S_i \Big]= \sum_{\sigma\in\Pi(k_1,\hdots,k_\ell)} & \int_{\XX^{|\sigma|}} \big(\otimes_{i=1}^\ell f^{(i)}\big)_\sigma(x_1,\hdots,x_{|\sigma|})\\
&  \times \EE[F(\eta_t+\sum_{i=1}^{|\sigma|}\delta_{x_i})] \,\mu_t^{|\sigma|}(\dint(x_1,\hdots,x_{|\sigma|}))\,.
\end{split}
\end{equation*}
\end{lemma}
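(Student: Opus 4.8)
The plan is to establish the moment formula for Poisson U-statistics by combining the multivariate Mecke formula with a careful combinatorial bookkeeping of the diagonals that arise when multiplying several U-statistics. The central identity I would aim to prove is an extension of the classical product formula, and the key conceptual step is recognizing that the product $\prod_{i=1}^\ell S_i$ is a sum over $\ell$-tuples of distinct-point configurations, and that such a product can be re-expressed as a single sum over partitioned configurations.

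\medskip

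\textbf{Step 1: Expand the product combinatorially.} First I would write out
$$
\prod_{i=1}^\ell S_i = \sum_{(x_1^{(1)},\hdots,x_{k_1}^{(1)})\in\eta_{t,\neq}^{k_1}} \cdots \sum_{(x_1^{(\ell)},\hdots,x_{k_\ell}^{(\ell)})\in\eta_{t,\neq}^{k_\ell}} \prod_{i=1}^\ell f^{(i)}(x_1^{(i)},\hdots,x_{k_i}^{(i)})\,.
$$
Within each factor the arguments are distinct points of $\eta_t$, but across different factors the points may coincide. The standard device is to partition this multiple sum according to the precise coincidence pattern of the points: each such pattern corresponds exactly to a partition $\sigma\in\Pi(k_1,\hdots,k_\ell)$, where the constraint that elements of each $J_i$ lie in distinct blocks encodes the within-factor distinctness built into $\eta_{t,\neq}^{k_i}$. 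After merging coincident variables, the summand becomes $(\otimes_{i=1}^\ell f^{(i)})_\sigma$ evaluated on $|\sigma|$ distinct points of $\eta_t$. This rewrites the product as
$$
\prod_{i=1}^\ell S_i = \sum_{\sigma\in\Pi(k_1,\hdots,k_\ell)} \sum_{(y_1,\hdots,y_{|\sigma|})\in\eta_{t,\neq}^{|\sigma|}} \big(\otimes_{i=1}^\ell f^{(i)}\big)_\sigma(y_1,\hdots,y_{|\sigma|})\,,
$$
so that the product is itself a finite sum of Poisson U-statistics of orders $|\sigma|$.

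\medskip

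\textbf{Step 2: Apply the multivariate Mecke formula.} With the product written as a sum over $\sigma$ of U-statistics built from the functions $(\otimes_{i=1}^\ell f^{(i)})_\sigma$, I would multiply by the bounded functional $F$ and take expectations term by term, justified by the integrability hypothesis $\int_{\XX^{|\sigma|}} |(\otimes_{i=1}^\ell f^{(i)})_\sigma|\,\dint\mu_t^{|\sigma|}<\infty$ together with the boundedness of $F$ (which controls $|F|\le\|F\|_\infty$ pointwise). For each fixed $\sigma$, the multivariate Mecke formula converts the expected sum over $\eta_{t,\neq}^{|\sigma|}$ into an integral against $\mu_t^{|\sigma|}$, where the integrand acquires the factor $\EE[F(\eta_t+\sum_{i=1}^{|\sigma|}\delta_{y_i})]$ reflecting the addition of the $|\sigma|$ deterministic atoms to the process. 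Summing over $\sigma$ reproduces exactly the claimed formula.

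\medskip

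\textbf{The main obstacle} will be handling the integrability and interchange of summation, expectation, and integration rigorously rather than the combinatorics, which is essentially bookkeeping. Specifically, the rewriting in Step 1 produces possibly signed summands, so to apply Fubini/Tonelli and Mecke one must first control the absolute integrals; the stated hypothesis is precisely what guarantees that each $\sigma$-term is absolutely convergent, and since $\Pi(k_1,\hdots,k_\ell)$ is a finite set, no issue of summing infinitely many terms arises. The one point demanding care is the simultaneous multiplicity of the points within a single factor $\eta_{t,\neq}^{k_i}$ versus across factors: I would verify that the partition constraint defining $\Pi(k_1,\hdots,k_\ell)$ (elements of $J_i$ in distinct blocks) faithfully encodes this, so that no configuration is counted twice and none is omitted. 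Once this correspondence is pinned down, the proof reduces to one application of the Mecke formula per partition.
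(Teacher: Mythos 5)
Your proposal follows exactly the paper's own argument: the product $\prod_{i=1}^\ell S_i$ is rewritten as a sum over partitions $\sigma\in\Pi(k_1,\hdots,k_\ell)$ of U-statistics with kernels $\big(\otimes_{i=1}^\ell f^{(i)}\big)_\sigma$ over $\eta_{t,\neq}^{|\sigma|}$, after which one application of the multivariate Mecke formula per partition yields the claim. Your additional remarks on absolute integrability and the finiteness of $\Pi(k_1,\hdots,k_\ell)$ merely make explicit what the paper leaves implicit, so there is no substantive difference.
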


\begin{proof}
We can rewrite the product as
\begin{equation*}
\begin{split}
& F(\eta_t) \prod_{i=1}^\ell \sum_{(x_1,\hdots,x_{k_i})\in\eta_{t,\neq}^{k_i}} f^{(i)}(x_1,\hdots,x_{k_i})\\
& = \sum_{\sigma\in\Pi(k_1,\hdots,k_\ell)} \sum_{(x_1,\hdots,x_{|\sigma|})\in\eta_{t,\neq}^{|\sigma|}} \big(\otimes_{i=1}^\ell f^{(i)}\big)_\sigma(x_1,\hdots,x_{|\sigma|}) F(\eta_t)
\end{split}
\end{equation*}
since points occurring in different sums on the left-hand side can be either equal or distinct. Now an application of the multivariate Mecke formula (see \cite{LastChapter}) completes the proof of the lemma. \hfill $\Box$
\end{proof}

\subsection{Poisson approximation of Poisson U-statistics}\label{subsec:PoissonLimit}

The key argument of the proof of Theorem \ref{thm:Main} is a quantitative bound for the Poisson approximation of Poisson U-statistics which is established in this subsection. From now on we consider the Poisson U-statistic
$$
S_A=\frac{1}{k!} \sum_{(x_1,\hdots,x_k)\in\eta^k_{t,\neq}} \I\{f(x_1,\hdots,x_k)\in A\}\,,
$$
where $f$ is as in Section \ref{sec:Results} and $A\subset\mathbb{R}$ is measurable and bounded. We assume that $k\geq 2$ since $S_A$ follows a Poisson distributon for $k=1$ (see Section 2.3 in \cite{Kingman}, for example). In the sequel, we use the abbreviation
$$
h(x_1,\hdots,x_k):=\frac{1}{k!} \I\{f(x_1,\hdots,x_k)\in A\}, \quad x_1,\hdots,x_k\in \XX\,.
$$
It follows from the multivariate Mecke formula (see \cite{LastChapter}) that
$$
s_A:=\EE[S_A]=\int_{\XX^k}h(x_1,\hdots,x_k) \, \mu_t^k(\dint(x_1,\hdots,x_k))\,.
$$
In order to compare the distributions of two integer-valued random variables $Y$ and $Z$, we use the so-called total variation distance $d_{TV}$ defined by
$$
d_{TV}(Y,Z)=\sup_{B\subset\mathbb{Z}}\big|\PP(Y\in B)-\PP(Z\in B)\big|\,.
$$

\begin{proposition}\label{prop:Poissonlimit}
Let $S_A$ be as above, let $Y$ be a Poisson distributed random variable with mean $s>0$ and define
$$
\varrho_A:= \max_{1\leq\ell\leq k-1} \int_{\XX^\ell} \bigg( \int_{\XX^{k-\ell}} h(x_1,\hdots,x_k) \, \mu_t^{k-\ell}(\dint(x_{\ell+1},\hdots,x_k)) \bigg)^2\mu_t^\ell(\dint(x_1,\hdots,x_\ell)).
$$
Then there is a constant $C\geq 1$ only depending on $k$ such that
\begin{equation}\label{eq:PropdTVBound}
d_{TV}(S_A,Y) \leq |s_A-s|+C\,\min\left\{1,{1\over s_A}\right\}\varrho_A\,.
\end{equation}
\end{proposition}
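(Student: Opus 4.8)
The plan is to invoke a quantitative Poisson-approximation result available for integer-valued random variables and then estimate the resulting error term using the moment formulas from Lemma~\ref{lem:momentsUstatistics}. Since $S_A$ is a Poisson U-statistic of order $k$, the natural tool is the Chen--Stein method for Poisson approximation. I would use a Stein-type bound of the following shape: for a nonnegative integer-valued $S_A$ with mean $s_A=\EE[S_A]$, and a Poisson variable $Y$ with mean $s$, one has
\begin{equation*}
d_{TV}(S_A,Y) \leq |s_A-s| + \min\Big\{1,\tfrac{1}{s_A}\Big\}\,\big|\,\EE[S_A(S_A-1)] - s_A^2\,\big|\,,
\end{equation*}
or a closely related inequality in which the second summand controls the deviation of the factorial moments of $S_A$ from those of a Poisson law. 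The first term $|s_A-s|$ accounts for the mismatch of means, and the Poisson variable of matching mean $s_A$ is approximated by the one of mean $s$ via the triangle inequality together with $d_{TV}(\mathrm{Poi}(s_A),\mathrm{Poi}(s))\leq|s_A-s|$. Thus the whole problem reduces to bounding the ``second factorial moment defect'' $\EE[S_A(S_A-1)]-s_A^2$ by a constant (depending only on $k$) times $\varrho_A$.

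The heart of the argument is therefore the computation of $\EE[S_A^2]$, equivalently $\EE[S_A(S_A-1)]$, via Lemma~\ref{lem:momentsUstatistics} applied with $\ell=2$, $f^{(1)}=f^{(2)}=h$, and $F\equiv 1$. This expresses $\EE[S_A^2]$ as a sum over partitions $\sigma\in\Pi(k,k)$ of integrals $\int_{\XX^{|\sigma|}}(h\otimes h)_\sigma\,\dint\mu_t^{|\sigma|}$. The partition in which no block merges a coordinate of the first copy with one of the second contributes exactly $s_A^2$; this is the ``diagonal-free'' term. Every remaining partition merges at least one pair of coordinates across the two blocks $J_1=\{1,\dots,k\}$ and $J_2=\{k+1,\dots,2k\}$, and I would show that each such integral is dominated by a constant times $\varrho_A$. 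Concretely, a partition that identifies exactly $k-\ell$ pairs across the two copies (for some $1\le\ell\le k-1$) yields, after using that $h$ is a product of $\tfrac1{k!}$ and an indicator and hence bounded by itself, an integral of the form
\begin{equation*}
\int_{\XX^\ell}\Big(\int_{\XX^{k-\ell}} h\,\dint\mu_t^{k-\ell}\Big)^2 \dint\mu_t^\ell\,,
\end{equation*}
which is precisely one of the $k-1$ terms whose maximum defines $\varrho_A$. (When a partition also collapses coordinates within a single copy, the corresponding integral is bounded by one of these by a further application of the indicator bound $\I\in\{0,1\}$ and $\mathbf{1}\{f\in A\}^2=\mathbf{1}\{f\in A\}$.) Since the number of partitions in $\Pi(k,k)$ is a constant depending only on $k$, summing these contributions gives $\big|\EE[S_A(S_A-1)]-s_A^2\big|\leq C\,\varrho_A$ with $C=C(k)$.

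The step I expect to be the main obstacle is the combinatorial bookkeeping of the partition sum: one must verify that \emph{every} cross-linking partition, including those that simultaneously merge within-copy coordinates, reduces to one of the $k-1$ canonical integrals appearing in $\varrho_A$, and that no partition produces a term of larger order than $\varrho_A$. Controlling the within-copy merges requires care, since these change the effective order of the inner integral; the key observation making this work is that $h^2=h$-type bounds and the symmetry of $h$ let every such integral be absorbed into the definition of $\varrho_A$ up to the combinatorial constant. The $\min\{1,1/s_A\}$ factor is inherited directly from the Stein bound, reflecting that for large $s_A$ the normalized second-moment defect is the relevant quantity. Once this combinatorial estimate is in place, the inequality \eqref{eq:PropdTVBound} follows by assembling the two error contributions.
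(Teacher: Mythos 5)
Your reduction of the problem to the estimate $\EE[S_A(S_A-1)]-s_A^2\le C(k)\,\varrho_A$ is correct, and the combinatorics is in fact simpler than you fear: by the definition of $\Pi(k,k)$, every block contains at most one index from each copy of $\{1,\dots,k\}$, so the admissible partitions are exactly the partial matchings between the two copies, and within-copy merges cannot occur. A matching of size $m\in\{1,\dots,k-1\}$ contributes $\binom{k}{m}^2 m!$ times the integral $\int_{\XX^m}\big(\int_{\XX^{k-m}}h\,\dint\mu_t^{k-m}\big)^2\dint\mu_t^m\le\varrho_A$, the empty matching contributes $s_A^2$, and the full matchings contribute $k!\int_{\XX^k} h^2\,\dint\mu_t^k=s_A$ (since $h^2=h/k!$), which cancels against the $-s_A$ in the factorial moment. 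The genuine gap is the inequality you start from. For a general nonnegative integer-valued random variable the bound $d_{TV}(S_A,Y)\le|s_A-s|+\min\{1,1/s_A\}\,\big|\EE[S_A(S_A-1)]-s_A^2\big|$ is \emph{false}: take $W$ with $\PP(W=a)=1-1/a$ and $\PP(W=0)=1/a$ for a large integer $a$; then $\EE W=a-1$ and $\EE[W(W-1)]=(a-1)^2=(\EE W)^2$, so the right-hand side (with $s=s_A$) vanishes, while the total variation distance between $W$ and a Poisson random variable of mean $a-1$ tends to $1$. Matching the first two factorial moments of a Poisson law controls nothing by itself.

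A bound of the shape you want does hold for sums of \emph{positively related} indicators (Barbour--Holst--Janson), and Poisson U-statistics are morally of that type: by the Mecke formula, conditioning on a $k$-tuple being present and contributing amounts to adding those $k$ points to the process, which can only increase the other indicators. But $S_A$ is a sum over a \emph{random} index set (the $k$-tuples of $\eta_t$), so the classical fixed-index-set theory does not apply off the shelf; moreover, when $\mu_t(\XX)=\infty$ you cannot condition on the number of points to fall back on the binomial result of Barbour and Eagleson used in Proposition \ref{prop:PoissApproxClassicalUStat} (whose bound, incidentally, contains the additional positive terms $p_Ip_J$ and $p_I^2$ rather than the bare factorial-moment defect, precisely because cancellation cannot be exploited in general). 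Supplying the missing Stein inequality for this class of functionals is the actual content of the paper's proof: the Chen--Stein equation is combined with the integration-by-parts formula \eqref{eqn:IntegrationByParts}; Mehler's formula (Lemma \ref{lem:DL-1}) yields an explicit representation of $-D_xL^{-1}S_A$ as a sum of U-statistics with nonnegative kernels, giving in particular the positivity $-D_xL^{-1}S_A\ge 0$ that plays the role of your missing monotone coupling; and a discrete Taylor expansion of the Stein solution reduces everything to moment integrals that are bounded by $C(k)\varrho_A$ via Lemma \ref{lem:momentsUstatistics} --- that last step being the counterpart of your partition computation. Without an argument of this kind, or a proof of positive relatedness adapted to the Poisson setting, your proposal does not close.
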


\begin{remark}
The inequality \eqref{eq:PropdTVBound} still holds if $Y$ is almost surely zero (such a $Y$ can be interpreted as a Poisson distributed random variable with mean $s=0$). In this case, we obtain by Markov's inequality that
$$
d_{TV}(S_A,Y) =\PP(S_A\geq 1) \leq \EE S_A=s_A.
$$

\end{remark}

Our proof of Proposition \ref{prop:Poissonlimit} is a modification of the proof of Theorem 3.1 in \cite{Peccati2011}. It makes use of the special structure of $S_A$ and improves of the bound in \cite{Peccati2011} in case of Poisson U-statistics. To prepare for what follows, we need to introduce some facts around the Chen-Stein method for Poisson approximation (compare with \cite{BourgPec}). For a function $f:\mathbb{N}_0\to\mathbb{R}$ let us define $\Delta f(k):=f(k+1)-f(k)$, $k\in\mathbb{N}_0$, and $\Delta^2f(k):=f(k+2)-2f(k+1)+f(k)$, $k\in\mathbb{N}_0$. For $B\subset \mathbb{N}_0$ let $f_B$ be the solution of the Chen-Stein equation
\begin{equation}\label{eqn:ChenStein}
\I\{k\in B\}-\PP(Y\in B)=s f(k+1)-kf(k), \quad k\in\mathbb{N}_0\,.
\end{equation}
It is known (see Lemma 1.1.1 in \cite{BarbourEtAl}) that $f_B$ satisfies
\begin{equation}\label{eq:PropertiesfB}
\|f_B\|_\infty \leq 1 \quad \text{and} \quad \|\Delta f_B\|_\infty\leq \min\left\{1,{1\over s}\right\}=:\varepsilon_1\,,
\end{equation}
where $\|\cdot\|_\infty$ is the usual supremum norm.

Besides the Chen-Stein method we need some facts concerning the Malliavin calculus of variations on the Poisson space (see \cite{LastChapter}). First, the so-called integration by parts formula implies that
\begin{equation}\label{eqn:IntegrationByParts}
\EE[f_B(S_A)(S_A-\EE[S_A])]=\EE\int_\XX D_xf_B(S_A) (-D_xL^{-1}S_A) \, \mu_t(\dint x)\,,
\end{equation}
where $D$ stands for the difference operator and $L^{-1}$ is the inverse of the Ornstein-Uhlenbeck generator (this step requires that $\EE\int_\XX(D_xS_A)^2\,\mu_t(\dint x)<\infty$, which is a consequence of the calculations in the proof of Proposition \ref{prop:Poissonlimit}). The following lemma (see Lemma 3.3 in \cite{ReitznerSchulte2013}) implies that the difference operator applied to a Poisson U-statistic leads again to a Poisson U-statistic.

\begin{lemma}\label{lem:D}
Let $k\in\NN$, $f\in L^1_s(\mu_t^k)$ and
$$
S=\sum_{(x_1,\hdots,x_k)\in\eta^k_{t,\neq}} f(x_1,\hdots,x_k)\,.
$$
Then
$$
D_xS=k \sum_{(x_1,\hdots,x_{k-1})\in\eta^{k-1}_{t,\neq}} f(x,x_1,\hdots,x_{k-1})\,, \quad x\in \XX\,.
$$
\end{lemma}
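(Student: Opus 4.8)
The plan is to work directly from the pathwise definition of the difference operator, $D_x S = S(\eta_t + \delta_x) - S(\eta_t)$, and to read off the result from the combinatorial structure of the sum over distinct $k$-tuples together with the symmetry of $f$. First I would expand $S(\eta_t + \delta_x)$ by classifying every tuple $(z_1,\hdots,z_k)$ of pairwise distinct points of $\eta_t + \delta_x$ according to how many of its coordinates coincide with the freshly added atom $x$. Because the entries of an admissible tuple must be pairwise distinct and $x$ is added with a single unit of mass, the point $x$ can occupy \emph{at most one} of the $k$ coordinate slots; in particular there are no terms in which two or more coordinates equal $x$, so only two groups of tuples survive.

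Next I would treat these two groups separately. The tuples none of whose coordinates equals $x$ are precisely the distinct $k$-tuples of $\eta_t$, so their total contribution is exactly $S(\eta_t)$, which cancels when the difference $S(\eta_t + \delta_x) - S(\eta_t)$ is formed. For a tuple in which $x$ sits in a fixed coordinate slot $j\in\{1,\hdots,k\}$, the remaining $k-1$ coordinates range over the distinct $(k-1)$-tuples of $\eta_t$, and since $f\in L^1_s(\mu_t^k)$ is symmetric the corresponding summand equals $f(x,x_1,\hdots,x_{k-1})$ independently of the position $j$. Summing over the $k$ admissible positions of $x$ therefore produces the prefactor $k$ and gives
\[
D_x S = k \sum_{(x_1,\hdots,x_{k-1}) \in \eta_{t,\neq}^{k-1}} f(x, x_1, \hdots, x_{k-1})\,,
\]
which is the asserted identity.

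The hard part will be the careful bookkeeping of the decomposition of the distinct $k$-tuples of $\eta_t + \delta_x$: one has to argue that the added atom enters each surviving tuple in exactly one slot, which is what simultaneously rules out the higher-order contributions and pins the combinatorial constant to be precisely $k$. The remaining points are routine: integrability of the right-hand side, needed so that $D_x S$ is well defined and lies in the appropriate space, follows from $f\in L^1_s(\mu_t^k)$ together with the multivariate Mecke formula (see \cite{LastChapter}), exactly as in the moment computations underlying Lemma~\ref{lem:momentsUstatistics}.
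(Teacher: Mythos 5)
Your proposal is correct and takes essentially the same route as the paper's proof: both expand $S(\eta_t+\delta_x)$ over distinct $k$-tuples, observe that the tuples avoiding $x$ cancel against $S(\eta_t)$, and use the symmetry of $f$ to identify the contributions from the $k$ possible positions of $x$, yielding the prefactor $k$. The only cosmetic difference is that you spell out the bookkeeping (at most one slot can be occupied by $x$, integrability via the Mecke formula) that the paper leaves implicit.
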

\begin{proof}
It follows from the definition of the difference operator and the assumption that $f$ is a symmetric function that
\begin{align*}
D_xS & = \sum_{(x_1,\hdots,x_k)\in(\eta_t+\delta_x)^k_{\neq}} f(x_1,\hdots,x_k) - \sum_{(x_1,\hdots,x_k)\in\eta^k_{t,\neq}} f(x_1,\hdots,x_k)\\
& = \sum_{(x_1,\hdots,x_{k-1})\in\eta^{k-1}_{t,\neq}} \big(f(x,x_1,\hdots,x_{k-1}) + \hdots + f(x_1,\hdots,x_{k-1},x)\big)\\
& = k\sum_{(x_1,\hdots,x_{k-1})\in\eta^{k-1}_{t,\neq}} f(x,x_1,\hdots,x_{k-1})
\end{align*}
for $x\in \XX$. This completes the proof.\hfill $\Box$
\end{proof}

In order to derive an explicit formula for the combination of the difference operator and the inverse of the Ornstein-Uhlenbeck generator of $S_A$, we define $h_\ell: \XX^\ell\to\mathbb{R}$, $\ell\in\{1,\hdots,k\}$, by
$$
h_\ell(x_1,\hdots,x_{\ell}):=\int_{\XX^{k-\ell}} h(x_1,\hdots,x_{\ell},\hat{x}_{1},\hdots,\hat{x}_{k-\ell}) \, \mu_t^{k-\ell}(\dint(\hat{x}_1,\hdots,\hat{x}_{k-\ell}))\,.
$$
We shall see now that the operator $-DL^{-1}$ applied to $S_A$ can be expressed as a sum of Poisson U-statistics (see also Lemma 5.1 in \cite{SchulteKolmogorov}).

\begin{lemma}\label{lem:DL-1}
For $x\in \XX$,
$$
-D_xL^{-1}S_A = \sum_{\ell=1}^{k}\sum_{(x_1,\hdots,x_{\ell-1})\in\eta^{\ell-1}_{t,\neq}} h_\ell(x,x_1,\hdots,x_{\ell-1})\,.
$$
\end{lemma}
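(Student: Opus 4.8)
My strategy is to pass through the Wiener--It\^o chaos expansion of $S_A$, apply the known action of the operator $-DL^{-1}$ chaos by chaos, and then re-expand the resulting multiple stochastic integrals as Poisson U-statistics; a combinatorial identity then collapses all coefficients to $1$.

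First I would record that $S_A$ is a Poisson U-statistic of order $k$ with the bounded and (under the standing assumptions, together with the finiteness of $\varrho_A$) square-integrable symmetric kernel $h$. By the chaos expansion of Poisson U-statistics (see \cite{ReitznerSchulte2013}) one has
\begin{equation*}
S_A=\EE[S_A]+\sum_{n=1}^{k} I_n(g_n),\qquad g_n=\binom{k}{n}\,h_n,
\end{equation*}
where $I_n$ denotes the $n$-th multiple integral with respect to the compensated process and $h_n$ is exactly the function introduced just before the statement of the lemma; the expansion terminates at order $k$ because $h$ has $k$ arguments. Next I would invoke the standard action of the Malliavin operators on a fixed chaos (see \cite{LastChapter}): $L^{-1}I_n(g)=-\tfrac1n I_n(g)$ and $D_xI_n(g)=n\,I_{n-1}(g(x,\cdot\,))$, so that $-D_xL^{-1}I_n(g)=I_{n-1}(g(x,\cdot\,))$. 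Applying this termwise gives
\begin{equation*}
-D_xL^{-1}S_A=\sum_{n=1}^{k}\binom{k}{n}\,I_{n-1}\big(h_n(x,\cdot\,)\big).
\end{equation*}

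It then remains to rewrite each multiple integral as a U-statistic. Using the inversion formula
\begin{equation*}
I_m(g)=\sum_{j=0}^{m}(-1)^{m-j}\binom{m}{j}\sum_{(x_1,\hdots,x_j)\in\eta^j_{t,\neq}}\int_{\XX^{m-j}} g(x_1,\hdots,x_j,y)\,\mu_t^{m-j}(\dint y)
\end{equation*}
(again from \cite{LastChapter}) with $m=n-1$ and $g=h_n(x,\cdot\,)$, and observing that integrating $h_n(x,\cdot\,)$ over $n-1-j$ of its variables reproduces $h_{j+1}(x,x_1,\hdots,x_j)$, I would exchange the order of summation. The coefficient multiplying $\sum_{(x_1,\hdots,x_j)\in\eta^j_{t,\neq}}h_{j+1}(x,x_1,\hdots,x_j)$ is then
\begin{equation*}
c_j=\sum_{n=j+1}^{k}(-1)^{n-1-j}\binom{k}{n}\binom{n-1}{j},\qquad j=0,\hdots,k-1.
\end{equation*}

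The heart of the argument---and the only genuinely delicate point---is to show $c_j=1$ for every $j$. I would do this via the telescoping polynomial identity $\frac{(1+x)^k-1}{x}=\sum_{i=0}^{k-1}(1+x)^i$: reindexing by $a=n-1$ and writing $\binom{a}{j}(-1)^{a-j}$ as what is produced by differentiating $x^a$ exactly $j$ times and evaluating at $x=-1$, one obtains $c_j=\tfrac1{j!}\,\frac{d^j}{dx^j}\sum_{i=0}^{k-1}(1+x)^i\big|_{x=-1}$, where only the $i=j$ summand survives at $x=-1$, giving $c_j=\binom{j}{j}=1$. Substituting $\ell=j+1$ then yields precisely the claimed formula. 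The remaining work is purely bookkeeping (checking integrability so that all expansions and the integration-by-parts step \eqref{eqn:IntegrationByParts} are legitimate, and matching the $\ell=1$ term, where the inner sum over $\eta^0_{t,\neq}$ reduces to the single term $h_1(x)$), so I expect the combinatorial collapse $c_j=1$ to be the main obstacle.
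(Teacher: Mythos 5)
Your proof is correct, but it takes a genuinely different route from the paper's. The paper proves Lemma \ref{lem:DL-1} by starting from Mehler's formula (the thinning representation of $-L^{-1}$ from \cite{LPS}), computing the conditional expectation over the $s$-thinning and the independent Poisson noise via the multivariate Mecke formula, and letting the Beta integral $\int_0^1 s^{\ell-1}(1-s)^{k-\ell}\,\dint s=(\ell-1)!(k-\ell)!/k!$ cancel the binomial coefficients; the final differentiation is then handled by Lemma \ref{lem:D}. You instead pass through the chaos expansion $S_A=\EE[S_A]+\sum_{n=1}^k I_n\big(\tbinom{k}{n}h_n\big)$ of \cite{ReitznerSchulte2013}, use the spectral action of $D$ and $L^{-1}$ on a fixed chaos, convert the resulting multiple integrals back into U-statistics via the pathwise inversion formula from \cite{LastChapter}, and collapse the coefficients with the identity
$$
\sum_{n=j+1}^{k}(-1)^{n-1-j}\binom{k}{n}\binom{n-1}{j}
=\frac{1}{j!}\,\frac{\dint^j}{\dint x^j}\sum_{i=0}^{k-1}(1+x)^i\bigg|_{x=-1}=1\,,
$$
which is indeed valid: your generating-function argument is sound, the terms with $n\leq j$ drop out automatically since the $j$-th derivative of $x^{n-1}$ vanishes, and only the $i=j$ summand of the geometric sum survives at $x=-1$. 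All ingredients are legitimate, and your integrability caveat ($\varrho_A<\infty$, hence $S_A\in L^2$) is the right one — the paper needs the same condition implicitly for $L^{-1}S_A$ and the integration-by-parts step \eqref{eqn:IntegrationByParts} to make sense. It is worth noting that the paper explicitly chose the Mehler route in order to avoid the chaotic-decomposition machinery used in \cite{STScalingLimits}; your argument reintroduces that machinery, though in a lighter form, since the pathwise inversion formula replaces the product formula for multiple stochastic integrals. What your approach buys is modularity: three cited black boxes plus one combinatorial identity. What the paper's approach buys is self-containedness: it stays entirely at the level of the point process and absorbs the combinatorics into a single Beta integral rather than an alternating sum.
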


\begin{proof}
By Mehler's formula (see Theorem 3.2 in \cite{LPS} and also \cite{LastChapter}) we have
$$
-L^{-1}S_A = \int_0^1\int \frac{1}{s} \EE\big[ \sum_{(x_1,\hdots,x_k)\in(\eta_t^{(s)}+\chi)^{k}_{\neq}} h(x_1,\hdots,x_k) -s_A  \big| \eta_t \big] \PP_{(1-s)\mu_t}(\dint \chi) \, \dint s
$$
where $\eta_t^{(s)}$, $s\in[0,1]$, is an $s$-thinning of $\eta_t$ and $\PP_{(1-s)\mu_t}$ is the distribution of a Poisson point process with intensity measure $(1-s)\mu_t$. Note in particular that $\eta_t^{(s)}+\chi$ is a Poisson point process with intensity measure $s\mu_t+(1-s)\mu_t=\mu_t$. The last expression can be rewritten as
\begin{align*}
-L^{-1}S_A & = \int_0^1\int \frac{1}{s} \EE\big[ \sum_{(\hat{x}_1,\hdots,\hat{x}_k)\in\chi_{\neq}^k} h(\hat{x}_1,\hdots,\hat{x}_k) -s_A \big| \eta_t \big] \PP_{(1-s)\mu_t}(\dint\chi) \, \dint s \allowdisplaybreaks\\
& \quad +\sum_{\ell=1}^k \binom{k}{\ell}\int_0^1\int \frac{1}{s} \EE\big[ \sum_{(x_1,\hdots,x_\ell)\in(\eta_t^{(s)})^\ell_{\neq}}\\
& \hskip 1cm \sum_{(\hat{x}_1,\hdots,\hat{x}_{k-\ell})\in\chi_{\neq}^{k-\ell}} h(x_1,\hdots,x_\ell,\hat{x}_1,\hdots,\hat{x}_{k-\ell})  \big| \eta_t \big] \, \PP_{(1-s) \mu_t}(\dint \chi) \, \dint s\,. \allowdisplaybreaks
\end{align*}
By the multivariate Mecke formula (see \cite{LastChapter}), we obtain for the first term that
\begin{equation*}
\begin{split}
& \int_0^1\int \frac{1}{s} \EE\big[ \sum_{(\hat{x}_1,\hdots,\hat{x}_k)\in\chi_{\neq}^k} h(\hat{x}_1,\hdots,\hat{x}_k) -s_A \big| \eta_t \big] \, \PP_{(1-s)\mu_t}(\dint\chi) \, \dint s\\
& = \int_0^1 \int \frac{1}{s} \bigg( \sum_{(\hat{x}_1,\hdots,\hat{x}_k)\in\chi_{\neq}^k} h(\hat{x}_1,\hdots,\hat{x}_k) -s_A \bigg) \, \PP_{(1-s)\mu_t}(\dint\chi) \, \dint s = \int_0^1 \frac{(1-s)^k-1}{s} \, \dint s \ s_A\,.
\end{split}
\end{equation*}
To evaluate the second term further, we notice that for an $\ell$-tuple $(x_1,\ldots,x_\ell)\in\eta_{t,\neq}^\ell$ the probability of surviving the $s$-thinning procedure is $s^\ell$. Thus
\begin{align*}
\EE\big[ &\sum_{(x_1,\hdots,x_\ell)\in(\eta_t^{(s)})^\ell_{\neq}}\sum_{(\hat{x}_1,\hdots,\hat{x}_{k-\ell})\in\chi_{\neq}^{k-\ell}} h(x_1,\hdots,x_\ell,\hat{x}_1,\hdots,\hat{x}_{k-\ell})  \big| \eta_t \big]\\
& = s^\ell\,\sum_{(x_1,\hdots,x_\ell)\in\eta_{t,\neq}^\ell}\sum_{(\hat{x}_1,\hdots,\hat{x}_{k-\ell})\in\chi_{\neq}^{k-\ell}} h(x_1,\hdots,x_\ell,\hat{x}_1,\hdots,\hat{x}_{k-\ell})
\end{align*}
for $\ell\in\{1,\hdots,k\}$. This leads to
\begin{align*}
 &-L^{-1}S_A  = \int_0^1 \frac{(1-s)^k-1}{s} \, \dint s \ s_A\\
& +\sum_{\ell=1}^k \binom{k}{\ell}\int_0^1\int s^{\ell-1} \hskip -0.6cm \sum_{(x_1,\hdots,x_\ell)\in\eta_{t,\neq}^\ell} \sum_{(\hat{x}_1,\hdots,\hat{x}_{k-\ell})\in\chi_{\neq}^{k-\ell}} \hskip -0.3cm
 h(x_1,\hdots,x_\ell,\hat{x}_1,\hdots,\hat{x}_{k-\ell}) \, \PP_{(1-s) \mu_t}(\dint \chi) \, \dint s \allowdisplaybreaks\,.
\end{align*}
Finally, we may interpret $\chi$ as $(1-s)$-thinning of an independent copy of $\eta_t$, in which each point has survival probability $(1-s)$. Then the multivariate Mecke formula (see \cite{LastChapter}) implies that
\begin{align*}
-L^{-1}S_A  = &\int_0^1 \frac{(1-s)^k-1}{s} \, \dint s \ s_A \\
&+\sum_{\ell=1}^k \binom{k}{\ell}\int_0^1 s^{\ell-1}(1-s)^{k-\ell} \,\dint s \sum_{(x_1,\hdots,x_\ell)\in\eta_{t,\neq}^\ell}  h_\ell(x_1,\hdots,x_\ell) \, .
\end{align*}
Together with
$$
\int_0^1 s^{\ell-1} (1-s)^{k-\ell} \, \dint s = \frac{(\ell-1)! (k-\ell)!}{k!}, \quad \ell\in\{1,\hdots,k\},
$$
we see that
$$
-L^{-1}S_A = s_A\int_0^1 \frac{(1-s)^k-1}{s}  \, \dint s+\sum_{\ell=1}^k \frac{1}{\ell} \sum_{(x_1,\hdots,x_\ell)\in\eta_{t,\neq}^\ell} h_\ell(x_1,\hdots,x_\ell)\,.
$$
Applying now the difference operator to the last equation, we see that the first term does not contribute, whereas the second term can be handled by using Lemma \ref{lem:D}.\hfill $\Box$
\end{proof}

Now we are prepared for the proof of Proposition \ref{prop:Poissonlimit}.

\begin{proof}[of Proposition \ref{prop:Poissonlimit}]
Let $Y_A$ be a Poisson distributed random variable with mean $s_A>0$. The triangle inequality for the total variation distance implies that
$$
d_{TV}(S_A,Y) \leq d_{TV}(Y,Y_A)+d_{TV}(Y_A,S_A)\,.
$$
A standard calculation shows that
$$
d_{TV}(Y,Y_A)\leq |s-s_A|
$$
so that it remains to bound
$$
d_{TV}(Y_A,S_A)=\sup_{B\subset \mathbb{N}_0} |\PP(S_A\in B)-\PP(Y_A\in B)|\,.
$$
For a fixed $B\subset\mathbb{N}_0$ it follows from \eqref{eqn:ChenStein} and \eqref{eqn:IntegrationByParts} that
\begin{equation}\label{eqn:ExpectationChenStein}
\begin{split}
\PP(S_A\in B)-\PP(Y_A\in B) & = \EE[s_A \Delta f_B(S_A)-(S_A-s_A) f_B(S_A)] \\
& = \EE\big[s_A \Delta f_B(S_A)-\int_\XX D_xf_B(S_A) (-D_xL^{-1}S_A) \, \mu_t(\dint x) \big].
\end{split}
\end{equation}
Now a straightforward computation using a discrete Taylor-type expansion as in \cite{Peccati2011} shows that
\begin{align*}
D_xf_B(S_A) & = f_B(S_A+D_xS_A)-f_B(S_A)\\
& = \sum_{k=1}^{D_xS_A} \big(f_B(S_A+k)-f_B(S_A+k-1)\big)\\
& = \sum_{k=1}^{D_xS_A} \Delta f_B(S_A+k-1)\\
& = \Delta f_B(S_A) D_xS_A + \sum_{k=2}^{D_xS_A} \big(\Delta f_B(S_A+k-1)-\Delta f_B(S_A)\big).
\end{align*}
Together with \eqref{eq:PropertiesfB}, we obtain that
\begin{align*}
\bigg| \sum_{k=2}^{D_xS_A} \big(\Delta f_B(S_A+k-1)-\Delta f_B(S_A)\big) \bigg| & \leq 2\|\Delta f_B\|_\infty \max\{0, D_xS_A-1\}\\
& \leq 2\varepsilon_{1,A}\max\{0, D_xS_A-1\}
\end{align*}
with
$$
\varepsilon_{1,A}:=\min\Big\{1,{1\over s_A}\Big\}\,.
$$
Hence, we have
$$
D_xf_B(S_A) = \Delta f_B(S_A) D_xS_A+R_x ,
$$
where the remainder term satisfies $|R_x|\leq 2\varepsilon_{1,A}\max\{0,D_xS_A-1\}$. Together with \eqref{eqn:ExpectationChenStein} and $-D_xL^{-1}S_A\geq 0$, which follows from Lemma \ref{lem:DL-1}, we obtain that
\begin{equation}\label{eqn:BoundPoissonApproximation}
\begin{split}
& |\PP(S_A\in B)-\PP(Y_A\in B)|\\
& \leq \big|\EE\big[s_A \Delta f_B(S_A) - \Delta f_B(S_A) \int_\XX D_xS_A (-D_xL^{-1}S_A)\, \mu_t(\dint x)\big]\big|\\
& \qquad + 2\varepsilon_{1,A} \int_\XX \EE[ \max\{0,D_xS_A-1\}(-D_xL^{-1}S_A)] \, \mu_t(\dint x)\,.
\end{split}
\end{equation}
It follows from Lemma \ref{lem:D} and Lemma \ref{lem:DL-1} that
\begin{align*}
& \EE\big[\Delta f_B(S_A) \int_\XX D_xS_A (-D_xL^{-1}S_A)\, \mu_t(\dint x)\big]\\
& = \EE\big[\Delta f_B(S_A(\eta_t)) \int_\XX \Big(k\sum_{(x_1,\hdots,x_{k-1})\in\eta^{k-1}_{t,\neq}} h(x,x_1,\hdots,x_{k-1})\Big)\\
& \hskip 4cm \times\Big(\sum_{\ell=1}^k \sum_{(x_1,\hdots,x_{\ell-1})\in\eta^{\ell-1}_{t,\neq}} h_\ell(x,x_1,\hdots,x_{\ell-1})\Big) \, \mu_t(\dint x)\big]\,.
\end{align*}
Consequently, we can deduce from Lemma \ref{lem:momentsUstatistics} that
\begin{align*}
& \EE\big[\Delta f_B(S_A) \int_\XX D_xS_A (-D_xL^{-1}S_A)\, \mu_t(\dint x)\big]\\
& = k \sum_{\ell=1}^k \sum_{\sigma\in\Pi(k-1,\ell-1)} \int_{\XX^{|\sigma|+1}} \EE\big[\Delta f_B(S_A(\eta_t+\sum_{i=1}^{|\sigma|}\delta_{x_i}))]\\
& \hskip 3cm \big(h(x,\cdot) \otimes h_\ell(x,\cdot)\big)_\sigma(x_1,\hdots,x_{|\sigma|}) \, \mu_t^{|\sigma|+1}(\dint(x,x_1,\hdots,x_{|\sigma|}))\,.
\end{align*}
For the particular choice $\ell=k$ and $|\sigma|=k-1$ we have
\begin{align*}
& \int_{\XX^{|\sigma|+1}} \EE\big[\Delta f_B(S_A(\eta_t+\sum_{i=1}^{|\sigma|}\delta_{x_i}))] \big(h(x,\cdot) \otimes h_\ell(x,\cdot) \big)_\sigma(x_1,\hdots,x_{|\sigma|}) \\
& \hskip 1cm \mu_t^{|\sigma|+1}(\dint(x,x_1,\hdots,x_{|\sigma|})) \allowdisplaybreaks\\
& = \frac{1}{k!}\int_{\XX^k} \EE\big[\Delta f_B(S_A(\eta_t+\sum_{i=1}^{k-1}\delta_{x_i}))] \, h(x_1,\hdots,x_k) \, \mu_t^{k}(\dint(x_1,\hdots,x_k)) \allowdisplaybreaks\\
& = \frac{1}{k!} \int_{\XX^k} \EE\big[\Delta f_B(S_A(\eta_t+\sum_{i=1}^{k-1}\delta_{x_i}))-\Delta f_B(S_A(\eta_t))] \, h(x_1,\hdots,x_k) \, \mu_t^{k}(\dint(x_1,\hdots,x_k))\\
& \qquad + \frac{1}{k!} \int_{\XX^k} \EE\big[\Delta f_B(S_A(\eta_t))] \, h(x_1,\hdots,x_k) \, \mu_t^{k}(\dint(x_1,\hdots,x_k)) \allowdisplaybreaks\\
& = \frac{1}{k!} \int_{\XX^k} \EE\big[\Delta f_B(S_A(\eta_t+\sum_{i=1}^{k-1}\delta_{x_i}))-\Delta f_B(S_A(\eta_t))] \, h(x_1,\hdots,x_k) \, \mu_t^{k}(\dint(x_1,\hdots,x_k))\\
& \qquad + \frac{1}{k!} \EE\big[\Delta f_B(S_A)] s_A\,.
\end{align*}
Since there are $(k-1)!$ partitions $\sigma\in\Pi(k-1,k-1)$ with $|\sigma|=k-1$, we obtain that
\begin{align*}
& \big|\EE\big[s_A \Delta f_B(S_A) - \Delta f_B(S_A) \int_\XX D_xS_A (-D_xL^{-1}S_A)\, \mu_t(\dint x)\big]\big|\\
& \leq k \sum_{\ell=1}^k \sum_{\sigma\in \Pi_{\neq}(k-1,\ell-1)}\int_{\XX^{|\sigma|+1}} |\,\EE\big[\Delta f_B(S_A(\eta_t+\sum_{i=1}^{|\sigma|}\delta_{x_i}))]\,|\\
& \hskip 3cm \big(h(x,\cdot) \otimes h_\ell(x,\cdot) \big)_\sigma(x_1,\hdots,x_{|\sigma|}) \, \mu_t^{|\sigma|+1}(\dint(x,x_1,\hdots,x_{|\sigma|}))\\
& \quad  +\int_{\XX^k} \big|\EE\big[\Delta f_B(S_A(\eta_t+\sum_{i=1}^{k-1}\delta_{x_i}))-\Delta f_B(S_A(\eta_t))]\big| \, h(x_1,\hdots,x_k) \, \mu_t^{k}(\dint(x_1,\hdots,x_k))\,.
\end{align*}
Now \eqref{eq:PropertiesfB} and the definition of $\varrho_A$ imply that, for $\ell\in\{1,\hdots,k\}$,
\begin{equation*}
\begin{split}
& \sum_{\sigma\in\Pi_{\neq}(k-1,\ell-1)} \int_{\XX^{|\sigma|+1}} \,|\EE\big[\Delta f_B(S_A(\eta_t+\sum_{i=1}^{|\sigma|}\delta_{x_i}))]\,|\\
& \quad \quad \quad \quad \quad \quad  \big(h(x,\cdot) \otimes h_\ell(x,\cdot) \big)_\sigma(x_1,\hdots,x_{|\sigma|}) \, \mu_t^{|\sigma|+1}(\dint(x,x_1,\hdots,x_{|\sigma|}))\\
 & \leq \varepsilon_{1,A}\, |\Pi_{\neq}(k-1,\ell-1)| \varrho_A\,.
\end{split}
\end{equation*}
Hence, the first summand above is bounded by
$$
k\varepsilon_{1,A}\sum_{\ell=1}^k |\Pi_{\neq}(k-1,\ell-1)|\varrho_A\,.$$
By \eqref{eq:PropertiesfB} we see that
$$
\big|\EE\big[\Delta f_B(S_A(\eta_t+\sum_{i=1}^{k-1}\delta_{x_i}))-\Delta f_B(S_A(\eta_t))]\big| \leq 2\varepsilon_{1,A}\EE[S_A(\eta_t+\sum_{i=1}^{k-1}\delta_{x_i})-S_A(\eta_t)]\,,
$$
and the multivariate Mecke formula for Poisson point processes (see \cite{LastChapter}) leads to
\begin{align*}
&\EE[S_A(\eta_t+\sum_{i=1}^{k-1}\delta_{x_i}))-S_A(\eta_t)] \\
& = \sum_{\emptyset\neq I \subset \{1,\hdots,k-1\}} {k!\over (k-|I|)!}\, \EE \sum_{(y_1,\hdots,y_{k-|I|})\in\eta^{k-|I|}_{t,\neq}} h(x_I,y_1,\hdots,y_{k-|I|})\\
& = \sum_{\emptyset\neq I \subset \{1,\hdots,k-1\}} {k!\over (k-|I|)!}\,h_{|I|}(x_I)\,,
\end{align*}
where for a subset $I=\{i_1,\ldots,i_j\}\subset\{1,\ldots,k-1\}$ we use the shorthand notation $x_I$ for $(x_{i_1},\ldots,x_{i_j})$. Hence,
\begin{align*}
& \int_{\XX^k} \big|\EE\big[\Delta f_B(S_A(\eta_t+\sum_{i=1}^{k-1}\delta_{x_i}))-\Delta f_B(S_A(\eta_t))]\big| \, h(x_1,\hdots,x_k) \, \mu_t^{k}(\dint(x_1,\hdots,x_k))\\
& \leq 2\varepsilon_{1,A} \int_{\XX^k} \sum_{\emptyset\neq I\subset\{1,\hdots,k-1\}} h_{|I|}(x_I) {k!\over (k-|I|)!}\, h(x_1,\hdots,x_k) \, \mu_t^{k}(\dint(x_1,\hdots,x_k))\\
& \leq 2\varepsilon_{1,A}\,k!(2^{k-1}-1) \varrho_A\,.
\end{align*}
This implies that
\begin{equation}\label{eqn:PartI}
\begin{split}
& \big|\EE\big[s_A \Delta f_B(S_A) - \Delta f_B(S_A) \int_\XX D_xS_A (-D_xL^{-1}S_A)\, \mu_t(\dint x)\big]\big|\\
&\leq \varepsilon_{1,A}\bigg(\, k \sum_{\ell=1}^k |\Pi_{\neq}(k-1,\ell-1)|  +2k!( 2^{k-1}-1)\bigg)\,\varrho_A =:C_1\,\varepsilon_{1,A}\varrho_A\,.
\end{split}
\end{equation}
For the second term in \eqref{eqn:BoundPoissonApproximation} we have
\begin{align*}
& 2 \int_\XX \EE[ \max\{0,D_xS_A-1\}(-D_xL^{-1}S_A)] \, \mu_t(\dint x) \\
& \leq \frac{2}k \int_\XX \EE[ \max\{0,D_xS_A-1\}D_xS_A] \, \mu_t(\dint x)\\
& \quad + 2 \int_\XX \EE[ \max\{0,D_xS_A-1\}  \ |D_xL^{-1}S_A+D_xS_A/k|] \, \mu_t(\dint x) \allowdisplaybreaks\\
& \leq \frac{2}{k} \int_\XX \EE[(D_xS_A-1)D_xS_A] \, \mu_t(\dint x)\\
& \quad + 2 \int_\XX \EE[ \sqrt{D_xS_A (D_xS_A-1)} \ |D_xL^{-1}S_A+D_xS_A/k|] \, \mu_t(\dint x)\\
& \leq 3 \int_\XX \EE[(D_xS_A-1)D_xS_A] \, \mu_t(\dint x) + \int_\XX \EE[ |D_xL^{-1}S_A+D_xS_A/k|^2] \, \mu_t(\dint x)\,.
\end{align*}
It follows from Lemma \ref{lem:D} and Lemma \ref{lem:momentsUstatistics} that
\begin{align*}
& \int_\XX \EE[(D_xS_A-1)D_xS_A] \, \mu_t(\dint x)\\
 & = \int_\XX k^2\sum_{\sigma\in\Pi(k-1,k-1)} \int_{\XX^{|\sigma|}}(h(x,\cdot)\otimes h(x,\cdot))_\sigma \, \dint\mu_t^{|\sigma|}\, \mu_t(\dint x) -k \int_{\XX^k} h \, \dint\mu_t^k\,.
\end{align*}
Since there are $(k-1)!$ partitions with $|\sigma|=k-1$ and for each of them $$(h(x,\cdot)\otimes h(x,\cdot))_\sigma(x_1,\ldots,x_{|\sigma|})=\frac{1}{k!}h(x,x_1,\ldots,x_{|\sigma|})\,,$$ this leads to
\begin{align*}
& \int_\XX \EE[(D_xS_A-1)D_xS_A] \, \mu_t(\dint x)\\
 & =k^2 \hskip -0.2cm \sum_{\sigma\in\Pi_{\neq}(k-1,k-1)}  \int_\XX \int_{\XX^{|\sigma|}}(h(x,\cdot)\otimes h(x,\cdot))_\sigma \, \dint\mu_t^{|\sigma|}\, \mu_t(\dint x)\\
 & \leq k^2 |\Pi_{\neq}(k-1,k-1)| \varrho_A\,.
\end{align*}
Lemma \ref{lem:D} and Lemma \ref{lem:DL-1} imply that
$$
D_xL^{-1}S_A+D_xS_A/k=-\sum_{\ell=1}^{k-1} \sum_{(x_1,\hdots,x_{\ell-1})\in \eta^{\ell-1}_{t,\neq}} h_\ell(x,x_1,\hdots,x_{\ell-1})
$$
so that Lemma \ref{lem:momentsUstatistics} yields
\begin{align*}
& \int_\XX \EE[ |D_xL^{-1}S_A+D_xS_A/k|^2] \, \mu_t(\dint x)\\
& =\int_\XX\sum_{i,j=1}^{k-1} \sum_{\sigma\in\Pi(i-1,j-1)} \int_{\XX^{|\sigma|}} (h_i(x,\cdot)\otimes h_j(x,\cdot))_\sigma \, \dint\mu_t^{|\sigma|} \, \mu_t(\dint x)\\
& \leq \sum_{i,j=1}^{k-1} |\Pi(i-1,j-1)| \varrho_A\,.
\end{align*}
From the previous estimates, we can deduce that
\begin{equation}\label{eqn:PartII}
\begin{split}
& 2\varepsilon_{1,A} \int_\XX \EE[ \max\{0,D_xS_A-1\}(-D_xL^{-1}S_A)] \, \mu_t(\dint x) \\
& \leq \varepsilon_1\bigg( 3k^2 |\Pi_{\neq}(k-1,k-1)| +\sum_{i,j=1}^{k-1} |\Pi(i-1,j-1)| \bigg)  \varrho_A=:C_2\,\varepsilon_{1,A}\varrho_A\,.
\end{split}
\end{equation}
Combining \eqref{eqn:BoundPoissonApproximation} with \eqref{eqn:PartI} and \eqref{eqn:PartII} shows that
$$
d_{TV}(S_A,Y)\leq |s_A-s|+(C_1+C_2) \varepsilon_{1,A}\varrho_A\,,$$
which concludes the proof. \hfill $\Box$
\end{proof}

\begin{remark}
As already discussed in the introduction, the proof of Proposition \ref{prop:Poissonlimit} -- the main tool for the proof of Theorem \ref{thm:Main} -- is different from that given in \cite{STScalingLimits}. One of the differences is Lemma \ref{lem:DL-1}, which provides an explicit representation for $-D_xL^{-1}S_A$ based on Mehler's formula. We took considerable advantage of this in the proof of Proposition \ref{prop:Poissonlimit} and remark that the proof of the corresponding result in \cite{STScalingLimits} uses the chaotic decomposition of U-statistics and the product formula for multiple stochastic integrals (see \cite{LastChapter}). Another difference is that our proof here does not make use of the estimates established by the Malliavin-Chen-Stein method in \cite{Peccati2011}. Instead, we directly manipulate the Chen-Stein equation for Poisson approximation and this way improve the rate of convergence compared to \cite{STScalingLimits}. A different method to show Theorem \ref{thm:Main} and Theorem \ref{thm:MainBinomial} is the content of the recent paper \cite{DST}. 
\end{remark}

\subsection{Poisson approximation of classical U-statistics}\label{sec:PoissonApproxClassicalUStat}

In this section we consider U-statistics based on a binomial point process $\zeta_n$ defined as
$$S_A={1\over k!}\sum_{(x_1,\ldots,x_k)\in\zeta_{n,\neq}^k}\I\{f(x_1,\ldots,x_k)\in A\}\,,$$
where $f$ is as in Section \ref{sec:Results} and $A\subset\RR$ is bounded and measurable. Recall that in the context of a binomial point process $\zeta_n$ we assume that $\mu(\XX)=1$. Denote as in the previous section by $s_A:=\EE[S_A]$ the expectation of $S_A$. Notice that
\begin{equation}\label{eq:ExpectationUStatClassical}
s_A = (n)_k\int_{\XX^k}h(x_1,\ldots,x_k)\,\mu^k(\dint(x_1,\ldots,x_k))
\end{equation}
with $h(x_1\ldots,x_k)=(k!)^{-1}\I\{f(x_1,\ldots,x_k)\in A\}$.

\begin{proposition}\label{prop:PoissApproxClassicalUStat}
Let $S_A$ be as above and let $Y$ be a Poisson distributed random variable with mean $s>0$ and define
\begin{align*}
\varrho_A:= \max_{1\leq\ell\leq k-1} (n)_{2k-\ell}\int_{\XX^\ell}  \bigg(&\int_{\XX^{k-\ell}} h(x_1,\ldots\\
&\hdots,x_k) \, \mu^{k-\ell}(\dint(x_{\ell+1},\hdots,x_k)) \bigg)^2\mu^\ell(\dint(x_1,\hdots,x_\ell))\,.
\end{align*}
Then there is a constant $C\geq 1$ only depending on $k$ such that
$$d_{TV}(S_A,Y)\leq|s_A-s|+C\,\min\Big\{1,{1\over s_A}\Big\}\Big(\varrho_A+{s_A^2\over n}\Big)\,.$$
\end{proposition}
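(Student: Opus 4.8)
The plan is to reduce everything to the local version of the Chen--Stein method for sums of dependent indicators, which in the U-statistic setting is exactly the bound of \cite{BarbourEagleson}. I would index the summands of $S_A$ by the $k$-element subsets $\alpha\subset\{1,\hdots,n\}$, writing $I_\alpha=\I\{f(x_i:i\in\alpha)\in A\}$ (well defined since $f$ is symmetric), so that $S_A=\sum_\alpha I_\alpha$. By exchangeability every $I_\alpha$ has the same mean $p:=\EE I_\alpha=k!\int_{\XX^k}h\,\dint\mu^k$, and $s_A=\binom{n}{k}p=(n)_k\int_{\XX^k}h\,\dint\mu^k$ as in \eqref{eq:ExpectationUStatClassical}. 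Taking the neighbourhood of dependence $B_\alpha:=\{\beta:\alpha\cap\beta\neq\emptyset\}$, the independence of the $n$ underlying points makes $I_\alpha$ independent of $(I_\beta)_{\beta\cap\alpha=\emptyset}$; hence the long-range Chen--Stein term vanishes and the bound collapses to $d_{TV}(S_A,Y_A)\leq\min\{1,1/s_A\}\,(b_1+b_2)$, where $Y_A$ is Poisson with mean $s_A$ and
\[
b_1=\sum_\alpha\sum_{\beta\in B_\alpha}\EE I_\alpha\,\EE I_\beta,\qquad b_2=\sum_\alpha\sum_{\beta\in B_\alpha,\,\beta\neq\alpha}\EE[I_\alpha I_\beta].
\]
The triangle inequality together with the standard estimate $d_{TV}(Y_A,Y)\leq|s_A-s|$ for two Poisson laws then gives the claimed form once $b_1$ and $b_2$ are controlled.

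The core of the argument is the combinatorial bookkeeping that sends $b_1$ to the term $s_A^2/n$ and $b_2$ to $\varrho_A$. For $b_1$, since all means equal $p$, I would simply count overlapping ordered pairs: the number of \emph{disjoint} ordered pairs of $k$-subsets is $\binom{n}{k}\binom{n-k}{k}$, so $b_1=p^2\bigl(\binom{n}{k}^2-\binom{n}{k}\binom{n-k}{k}\bigr)=s_A^2\bigl(1-\binom{n-k}{k}/\binom{n}{k}\bigr)$. A short estimate of the ratio of binomial coefficients shows $1-\binom{n-k}{k}/\binom{n}{k}\leq c_k/n$ for $n\geq 2k$, whence $b_1\leq c_k\,s_A^2/n$.

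For $b_2$ I would split according to the overlap size $\ell=|\alpha\cap\beta|\in\{1,\hdots,k-1\}$. Setting $g_\ell(x_1,\hdots,x_\ell):=\int_{\XX^{k-\ell}}h(x_1,\hdots,x_\ell,y_1,\hdots,y_{k-\ell})\,\dint\mu^{k-\ell}$, integrating out the non-shared coordinates gives $\EE[I_\alpha I_\beta]=(k!)^2\int_{\XX^\ell}(g_\ell)^2\,\dint\mu^\ell$ for a pair of overlap $\ell$, while the number of ordered pairs with $|\alpha\cap\beta|=\ell$ is $N_\ell:=\binom{n}{k}\binom{k}{\ell}\binom{n-k}{k-\ell}$. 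The decisive identity is that $N_\ell/(n)_{2k-\ell}=\binom{k}{\ell}/(k!\,(k-\ell)!)$ depends on $k$ alone; combined with $\int_{\XX^\ell}(g_\ell)^2\,\dint\mu^\ell\leq\varrho_A/(n)_{2k-\ell}$, which holds verbatim by the definition of $\varrho_A$ as a maximum over $\ell$, this yields $b_2\leq c_k'\,\varrho_A$. The degenerate case $s_A=0$ (forcing $S_A\equiv 0$) I would dispatch directly, as in the remark following Proposition \ref{prop:Poissonlimit}.

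I expect the main obstacle to be precisely this combinatorial matching rather than anything probabilistic: one has to check that the overlap multiplicities $N_\ell$ are \emph{exactly} compensated by the falling factorial $(n)_{2k-\ell}$ built into $\varrho_A$, so that all constants depend only on $k$, and that the two error contributions separate cleanly into $\varrho_A$ (from proper overlaps, $1\leq\ell\leq k-1$) and $s_A^2/n$ (from the near-total count of overlapping pairs in $b_1$). Once the exact cancellation $N_\ell/(n)_{2k-\ell}=\binom{k}{\ell}/(k!\,(k-\ell)!)$ is in hand, collecting $b_1+b_2\leq C\bigl(\varrho_A+s_A^2/n\bigr)$ is routine.
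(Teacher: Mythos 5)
Your proposal is correct and follows essentially the same route as the paper: the paper likewise writes $S_A$ as a sum of indicators over $k$-element subsets, invokes Theorem 2 of \cite{BarbourEagleson} (precisely the local Chen--Stein $b_1/b_2$ bound you state), and does the same overlap bookkeeping, bounding the product-of-means term by $2^kk!\,s_A^2/n$ and the joint-moment term by $2^kk!\,\varrho_A$ via the identity $(n)_k(n-k)_{k-\ell}=(n)_{2k-\ell}$ that underlies your cancellation $N_\ell/(n)_{2k-\ell}=\binom{k}{\ell}/(k!\,(k-\ell)!)$. The only cosmetic differences are your complementary-counting evaluation of $b_1$ and its restriction to $n\geq 2k$, which is harmless since for $k\leq n<2k$ one has $b_1=s_A^2\leq 2k\,s_A^2/n$, so the case is absorbed into the constant.
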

\begin{proof}
By the same arguments as at the beginning of the proof of Proposition \ref{prop:Poissonlimit} it is sufficient to assume that $s=s_A$ in what follows. To simplify the presentation we put $N:=\{I\subset\{1,\ldots,n\}:|I|=k\}$ and re-write $S_A$ as
$$S_A=\sum_{I\in N}\I\{f(X_I)\in A\}\,,$$ where $X_1,\ldots,X_n$ are i.i.d.\ random elements in $\XX$ with distribution $\mu$ and where $X_I$ is shorthand for $(X_{i_1},\ldots,X_{i_k})$ if $I=\{i_1,\ldots,i_k\}$. In this situation it follows from Theorem 2 in \cite{BarbourEagleson} that
\begin{align*}
d_{TV}(S,Y)\leq &\min\Big\{1,{1\over s_A}\Big\}\sum_{I\in N}\Big(\PP(f(X_I)\in A)^2+\sum_{r=1}^{k-1}\sum_{J\in N\atop |I\cap J|=r}\PP(f(X_I)\in A)\PP(f(X_J)\in A)\Big)\\
&+\min\Big\{1,{1\over s_A}\Big\}\sum_{I\in N}\sum_{r=1}^{k-1}\sum_{J\in N\atop |I\cap J|=r}\PP(f(X_I)\in A,f(X_J)\in A)\,.
\end{align*}
Since $s_A=\EE[S_A]={(n)_k\over k!}\PP(f(X_1,\ldots,X_k)\in A)$, we have that
\begin{align*}
&\sum_{I\in N}\Big(\PP(f(X_I)\in A)^2+\sum_{r=1}^{k-1}\sum_{J\in N\atop |I\cap J|=r}\PP(f(X_I)\in A)\PP(f(X_J)\in A)\Big)\\
&={(n)_k\over k!}\Big(\Big({k!\over (n)_k}s_A\Big)^2+\sum_{r=1}^{k-1}\sum_{J\in N\atop |I\cap J|=r}\Big({k!\over (n)_k}s_A\Big)^2\,\Big)\\
&={k!\over (n)_k}s_A^2\Big(1+\sum_{r=1}^{k-1}{k\choose r}{n-k\choose k-r}\,\Big)\\
&\leq {k!\over (n)_k}s_A^2\,2^k(n-1)_{k-1}\\
&\leq {2^kk!s_A^2\over n}\,.
\end{align*}
For the second term we find that
\begin{align*}
&\sum_{I\in N}\sum_{r=1}^{k-1}\sum_{J\in N\atop |I\cap J|=r}\PP(f(X_I)\in A,f(X_J)\in A)\\
&={(n)_k\over k!}\sum_{r=1}^{k-1}{k\choose r}{n-k\choose k-r}\PP(f(X_1,\ldots,X_k)\in A,f(X_1,\ldots,X_r,X_{k+1},\ldots,X_{2k-r})\in A)\\
&\leq {(n)_k\over k!}\sum_{r=1}^{k-1}{k\choose r}{n-k\choose k-r}{(k!)^2\over (n)_{2k-r}}\varrho_A\\
&\leq 2^kk!\,\varrho_A\,.
\end{align*}
Putting $C:=2^kk!$ proves the claim.\hfill $\Box$
\end{proof}

\subsection{Proofs of Theorem \ref{thm:Main} and \ref{thm:MainBinomial} and Corollary \ref{corol:Main} and \ref{corol:MainBinomial}}\label{subsec:ProofsMain}

\begin{proof}[of Theorem \ref{thm:Main}]
We define the set classes
$$
{\bf I}=\{I=(a,b]: a,b\in\RR, a<b\}
$$
and
$$
{\bf V}=\{V=\bigcup_{i=1}^n I_i: n\in\NN, I_i\in{\bf I}, i=1,\hdots,n\}.
$$
From \cite[Theorem 16.29]{Kallenberg} it follows that $(t^\gamma \xi_t)_{t>0}$ converges in distribution to a Poisson point process $\xi$ with intensity measure $\nu$ if
\begin{equation}\label{eqn:ConditionConvergenceI}
\lim\limits_{t\to\infty} \PP(\xi_t(t^{-\gamma}V)=0)=\PP(\xi(V)=0)=\exp(-\nu(V)), \quad V\in{\bf V}\,,
\end{equation}
and
\begin{equation}\label{eqn:ConditionConvergenceII}
\lim\limits_{t\to\infty} \PP(\xi_t(t^{-\gamma}I)>1)=\PP(\xi(I)>1)=1-(1+\nu(I))\exp(-\nu(I)), \quad I\in{\bf I}\,.
\end{equation}
Note that ${\bf I}\subset{\bf V}$ and that every set $V\in{\bf V}$ can be represented in the form
$$
V=\bigcup_{i=1}^n (a_i,b_i] \quad \text{with} \quad a_1<b_1<\hdots<a_n<b_n \quad \text{and} \quad n\in\NN\,.
$$
For $V\in {\bf V}$ we define the Poisson U-statistic
$$
S_{V,t}=\frac{1}{k!} \sum_{(x_1,\hdots,x_k)\in\eta_{t,\neq}^k} \I\{f(x_1,\hdots,x_k)\in t^{-\gamma} V\}\,,
$$
which has expectation
\begin{align*}
\EE[S_{V,t}] & = \frac{1}{k!} \EE \sum_{(x_1,\hdots,x_k)\in\eta_{t,\neq}^k} \I\{f(x_1,\hdots,x_k)\in t^{-\gamma} V\}\\
& = \sum_{i=1}^n \frac{1}{k!} \EE\sum_{(x_1,\hdots,x_k)\in\eta_{t,\neq}^k} \I\{f(x_1,\hdots,x_k)\in t^{-\gamma} (a_i,b_i]\}=\sum_{i=1}^n \alpha_t(a_i,b_i).
\end{align*}
Since $\xi(V)$ is Poisson distributed with mean $\nu(V)=\sum_{i=1}^n \nu((a_i,b_i])$, it follows from Proposition \ref{prop:Poissonlimit} that
$$
d_{TV}(S_{V,t},\xi(V))\leq \bigg|\sum_{i=1}^n \alpha_t(a_i,b_i) - \sum_{i=1}^n \nu((a_i,b_i]) \bigg|+C\,r_t(y_{max})
$$
with $y_{max}:=\max\{|a_1|,|b_n|\}$ and $C\geq 1$. Now, assumptions \eqref{eqn:MainAssumptionI} and \eqref{eqn:MainAssumptionII} yield that
$$
\lim\limits_{t\to\infty} d_{TV}(S_{V,t},\xi(V)) =0\,.
$$
Consequently, the conditions \eqref{eqn:ConditionConvergenceI} and \eqref{eqn:ConditionConvergenceII} are satisfied so that $(t^\gamma \xi_t)_{t>0}$ converges in distribution to $\xi$. Choosing $V=(0,y]$ and using the fact that $t^\gamma M_t^{(m)}>y$ is equivalent to $S_{(0,y],t}<m$ lead to the first inequality in Theorem \ref{thm:Main}. The second one follows analogously from $V=(-y,0]$ and by using the equivalence of $t^\gamma M_t^{(-m)}\geq y$ and $S_{(-y,0],t}<m$.\hfill $\Box$
\end{proof}

\begin{proof}[of Corollary \ref{corol:Main}]
Theorem \ref{thm:Main} with $\nu$ defined as in \eqref{eqn:nu} yields the assertions of Corollary \ref{corol:Main}.\hfill $\Box$
\end{proof}

\begin{proof}[of Theorem \ref{thm:MainBinomial} and Corollary \ref{corol:MainBinomial}]
Since the proofs are similar to those of Theorem \ref{thm:Main} and Corollary \ref{corol:Main}, we skip the details.\hfill $\Box$
\end{proof}

\end{document}